\pgfplotsset{compat=1.15}
\numberwithin{equation}{section}
\newcommand{\C}{\mathbb{C}}
\newcommand{\R}{\mathbb{R}}
\newcommand{\Z}{\mathbb{Z}}
\newcommand{\N}{\mathbb{N}}
\newcommand{\la}{\lambda}
\newtheoremstyle{rem}{3pt}{3pt}{}{}
{\bfseries}{.}{.5em}{}
\newtheorem{theo}{Theorem}[section]
\newtheorem*{theo*}{Theorem}
\newtheorem{lemm}[theo]{Lemma}
\newtheorem{example}[theo]{Example}
\newtheorem{prop}[theo]{Proposition}
\newtheorem{coro}[theo]{Corollary}
\theoremstyle{rem}
\newenvironment{rema}
  {\pushQED{\qed}\remax}
  {\popQED\endremax}
\theoremstyle{definition}
\newtheorem{defi}[theo]{Definition}
\newtheorem*{term*}{Notation/Terminology}
\DeclareMathOperator{\Pol}{Pol}
\DeclareMathOperator{\Hom}{Hom}
\newcommand{\RC}[3]{\operatorname{RC}^{#3}_{#1,#2}}
\title{Identities for Rankin--Cohen brackets, Racah coefficients and associativity}
\author{Q. Labriet\footnote{Department of Mathematics, Aarhus University, Ny Munkegade 118, DK-8000, Aarhus C, Denmark. \emph{email adress}: quentin.labriet@math.au.dk}, L. Poulain d'Andecy\footnote{Laboratoire de math\'ematiques de Reims LMR, UMR 9008, Universit\'e de Reims Champagne-Ardenne, Moulin de la Housse BP 1039, 51100 Reims, France. \emph{email adress}: loic.poulain-dandecy@univ-reims.fr}}
\date{}
\begin{document}
\maketitle

\begin{abstract}
We prove an infinite family of identities satisfied by the Rankin--Cohen brackets involving the Racah polynomials. A natural interpretation in the representation theory of $sl(2)$ is provided. From these identities and known properties of the Racah polynomials follows a short new proof of the associativity of the Eholzer product. Finally, we discuss, in the context of Rankin--Cohen algebras introduced by D.Zagier, how any algebraic identity satisfied by the Rankin--Cohen brackets can be seen as a consequence of the set of identities presented in this paper.   
\end{abstract}

\section{Introduction}

The Rankin--Cohen brackets (RC brackets, for short) were introduced as a family of differential operators which transform a pair of modular forms into a modular form of highest weight (\cite{Cohen75}, \cite{Rankin56}). More generally, we can consider them as bi-differential operators acting on pairs of holomorphic functions of a complex variable. For $\lambda_1,\lambda_2 \in \C$ and $n\in\mathbb{Z}_{\geq0}$, the RC bracket is defined via the following formula:
\begin{equation}\label{def:RC}
\RC{\lambda_1}{\lambda_2}{n}(f,g)(z)=\sum_{s=0}^n(-1)^s\binom{\lambda_1+n-1}{n-s}\binom{\lambda_2+n-1}{s} \partial^{(s)}f(z)\partial^{(n-s)}g(z).
\end{equation}
In \cite{Zagier94}, D.Zagier studied the RC brackets from an algebraic point of view and showed that they satisfy some algebraic identities which can be checked directly such as:
\begin{align}
0&=[[f_1,f_2]_1,f_3]_1+[[f_2,f_3]_1,f_1]_1+[[f_3,f_1]_1,f_2]_1;\label{eq:IdentityRankinCohen1}\\
0&=\lambda_3[[f_1,f_2]_1,f_3]_0+\lambda_1[[f_2,f_3]_1,f_1]_0+\lambda_2[[f_3,f_1]_1,f_2]_0. \label{eq:IdentityRankinCohen2}
\end{align}
Following \cite{Zagier94}, a standard notation $[f_i,f_j]_n$ is used instead of $\RC{\lambda_i}{\lambda_j}{n}(f_i,f_j)$. This notation is hiding the parameters of the brackets and the convention is the following: it is understood that each function $f_i$ is associated to a parameter $\lambda_i$, and moreover that the function $[f_i,f_j]_k$ is associated to the parameter $\lambda_i+\lambda_j+2k$.

W.Eholzer introduced the following product on the space of all modular forms which we call the \emph{Eholzer product} (see \cite{CMZ97,Zagier94}):
\begin{equation}\label{def:StarProduct}
f\star g =\sum_n~ [f,g]_n\hbar^n.
\end{equation}
He conjectured that this product is associative which gives many other algebraic identities satisfied by the RC brackets. The associativity was proved with two different methods in \cite{CMZ97} and \cite{ConnesMoscovici03}. More details on Rankin--Cohen brackets can be found for example in \cite{CMZ97,ConnesMoscovici03,KobayashiPevzner16, Medina16, Pevzner18, Unterberger96,Zagier94} and references therein.

The main result of this paper is the following set of identities satisfied by the RC brackets.
\begin{theo}\label{thm:EquationRC}
Let $\lambda_1,\lambda_2,\lambda_3>0$ and $n\in \N$. Then the following equality is satisfied:
\begin{equation}\label{eq:EquationRCbrackets}
[[f_1,f_2]_k,f_3]_{n-k}=\sum_{p=0}^nU^{\lambda_1,\lambda_2;k}_{\lambda_3;n,p}[f_1, [f_2,f_3]_p]_{n-p}\ ,
\end{equation}
where 
\begin{equation}\label{eq:FormulaCoeff}
U^{\lambda_1,\lambda_2;k}_{\lambda_3;n,p}=\binom{n}{k}\frac{(\lambda_2)_k(\lambda_3)_{n-k}(\lambda_1+\lambda_2+\lambda_3+n-1)_p}{(\lambda_3)_p(\lambda_2+\lambda_3+p-1)_p(\lambda_2+\lambda_3+2p)_{n-p}} R_{p,k}.
\end{equation}
Here $(x)_m$ is the Pochhammer symbol $
(x)(x+1)\dots (x+m-1)$ and $R_{p,k}$ denotes the values of the Racah polynomials (see \cite{KLS10}) given by
\[R_{p,k}={}_4F_3\left(\begin{array}{c} -p\ ,\ p+\lambda_2+\lambda_3-1\ ,\ -k\ ,\ k+\lambda_1+\lambda_2-1 \\ \lambda_2\ ,\ \lambda_1+\lambda_2+\lambda_3+n-1\ ,\ -n\end{array}; 1\right).\] 
\end{theo}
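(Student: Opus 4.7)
The plan is to interpret both sides of \eqref{eq:EquationRCbrackets} in the representation theory of $sl(2)$, through the decomposition of a triple tensor product, and to recognize the coefficients $U^{\lambda_1,\lambda_2;k}_{\lambda_3;n,p}$ as matrix entries of a change of basis between two natural bases in a multiplicity space, which are classically computed by the $6j$-symbols of $sl(2)$.

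Concretely, for each $\lambda>0$, the space of holomorphic functions carries a lowest-weight $sl(2)$-module structure $V_\lambda$, and one has the Clebsch--Gordan-type decomposition $V_{\lambda_1}\otimes V_{\lambda_2}\cong\bigoplus_{n\geq 0}V_{\lambda_1+\lambda_2+2n}$. The first step is to show that $\RC{\lambda_1}{\lambda_2}{n}$ coincides, up to an explicit scalar that one reads off by evaluating on the lowest-weight vector, with the (unique up to scalar) $sl(2)$-equivariant projector onto the $n$-th summand. In the triple tensor product $V_{\lambda_1}\otimes V_{\lambda_2}\otimes V_{\lambda_3}$ the isotypic component $V_{\lambda_1+\lambda_2+\lambda_3+2n}$ appears with multiplicity $n+1$, and the families $\bigl([[\cdot,\cdot]_k,\cdot]_{n-k}\bigr)_{k=0,\dots,n}$ and $\bigl([\cdot,[\cdot,\cdot]_p]_{n-p}\bigr)_{p=0,\dots,n}$ yield two bases of the space of equivariant projections onto $V_{\lambda_1+\lambda_2+\lambda_3+2n}$, obtained by fusing the first two, respectively the last two, factors first. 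The identity \eqref{eq:EquationRCbrackets} is then precisely the change-of-basis matrix between these bases, and the classical theorem of Racah identifies this matrix with a balanced ${}_4F_3$; the explicit prefactor in \eqref{eq:FormulaCoeff} records the accumulated normalizations from the scalars linking each RC bracket to its associated projector along the two fusion paths.

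The main obstacle will not be the conceptual identification of the Racah polynomials, which is forced by the $sl(2)$-setup, but rather the normalization bookkeeping in the final step: matching the classical $6j$-symbol conventions with the combinatorial definition \eqref{def:RC} of the RC brackets is where sign and Pochhammer errors most easily creep in. As sanity checks I would specialize to $k=0$ (where the Racah polynomial collapses to $1$ and the identity reduces to a single-parameter recasting of the right-hand sum) and to $n=1$ in order to recover Zagier's low-order identities \eqref{eq:IdentityRankinCohen1}--\eqref{eq:IdentityRankinCohen2}. A purely computational alternative is to evaluate both sides on exponentials $f_i(z)=e^{a_iz}$, turning \eqref{eq:EquationRCbrackets} into a polynomial identity in $a_1,a_2,a_3$ whose coefficient-by-coefficient verification amounts to a known hypergeometric summation for the Racah polynomials; this route avoids the representation-theoretic framework entirely but does not explain the intrinsic reason for the Racah coefficients to appear.
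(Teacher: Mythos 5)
Your proposal is essentially the paper's approach: the paper interprets the RC brackets as Fischer-inner-product adjoints of the explicit Jacobi-polynomial intertwiners $V^*_{\lambda_1+\lambda_2+2\ell}\to V^*_{\lambda_1}\otimes V^*_{\lambda_2}$ (Corollary \ref{coro;Intertwining} and Theorem \ref{thm:RCasSBO}) and obtains the identity, coefficients included, by dualizing a known convolution formula for Jacobi polynomials (Theorem \ref{theo:ConvolutionRacah}, imported from \cite{KVJ98,LabrietPoulain23}) rather than re-deriving the coefficients from $6j$-symbol normalizations --- the ``normalization bookkeeping'' you defer is precisely the quantitative content of that cited formula. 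Note also that your ``purely computational alternative'' of evaluating on exponentials is not really a different route: since $\RC{\lambda_1}{\lambda_2}{n}(e^{a_1z},e^{a_2z})=(a_1+a_2)^nP_n^{(\lambda_1-1,\lambda_2-1)}\bigl(\tfrac{a_2-a_1}{a_1+a_2}\bigr)e^{(a_1+a_2)z}$, it reproduces verbatim the Jacobi convolution identity that is the paper's key input.
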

Using the full notation including the parameters, the equality \eqref{eq:EquationRCbrackets} reads as the following equality of operators: 
\begin{equation}\label{eq:EquationRC}
\RC{\lambda_1+\lambda_2+2k}{\lambda_3}{n-k}\circ (\RC{\lambda_1}{\lambda_2}{k}\otimes Id)=\sum_{p=0}^n U^{\lambda_1,\lambda_2;k}_{\lambda_3;n,p}\RC{\lambda_1}{\lambda_2+\lambda_3+2p}{n-p}\circ (Id\otimes \RC{\lambda_2}{\lambda_3}{p})\ .
\end{equation}
This result gives a large family of relations generalising the identities mentioned above. In fact, as a first application, we provide yet another proof of the associativity of the Eholzer product. Indeed, we show that the associativity becomes an elementary consequence of some classical properties of Racah polynomials. Thus the identities corresponding to the associativity of the Eholzer product are consequences of the identities proved in Theorem \ref{thm:EquationRC}. We will go a little bit further and in connections with the notion of Rankin--Cohen algebras introduced in \cite{Zagier94}, we will show that, in some sense, all algebraic identities satisfied by the RC brackets are consequences of the ones above (and of the well-known antisymmetry property).

\vskip .2cm
Now let us interpret Theorem \ref{thm:EquationRC} in terms of the representation theory of the Lie algebra $\mathfrak{sl}_2$. Under suitable conditions on $\lambda_1,\lambda_2\in\mathbb{C}$ (see Section \ref{sec:VermaModules} for more precision), we have the following decomposition of the tensor product of two highest weight Verma modules $V_{\lambda_1},V_{\lambda_2}$:
\begin{equation*}
V_{\lambda_1}\otimes V_{\lambda_2}\simeq \bigoplus_{k\in \N} V_{\lambda_1+\lambda_2+2k}\ .
\end{equation*}
For a suitable realisation of the $V_\lambda$'s as spaces of holomorphic functions on the upper half-plane, it is known (see \cite{KobayashiPevzner16} for example) that the RC bracket $\RC{\lambda_1}{\lambda_2}{k}$ is an intertwining operator between $V_{\lambda_1}\otimes V_{\lambda_2}$ and $V_{\lambda_1+\lambda_2+2k}$ generating the space $\Hom_{\mathfrak{sl}_2}(V_{\lambda_1}\otimes V_{\lambda_2},V_{\lambda_1+\lambda_2+2k})$. We give another proof of this result in Section \ref{sec:IntertwiningOperator}.

Then looking at the threefold tensor product of Verma modules we have the following decomposition:
\begin{equation*}
V_{\lambda_1}\otimes V_{\lambda_2}\otimes V_{\lambda_3}\simeq \bigoplus_{n\in \N}~ (n+1)V_{\lambda_1+\lambda_2+\lambda_3+2n}.
\end{equation*}
Each of the two families
$$\{\RC{\lambda_1+\lambda_2+2k}{\lambda_3}{n-k}\circ (\RC{\lambda_1}{\lambda_2}{k}\otimes Id)\}_{0\leq k\leq n}\quad\text{and}\quad\{\RC{\lambda_1}{\lambda_2+\lambda_3+2p}{n-p}\circ (Id\otimes \RC{\lambda_2}{\lambda_3}{p})\}_{0\leq p\leq n}\ $$ 
forms a basis of the space of intertwining operators $\Hom_{\mathfrak{sl}_2}(V_{\lambda_1}\otimes V_{\lambda_2}\otimes V_{\lambda_3},V_{\lambda_1+\lambda_2+\lambda_3+2n})$.
Hence, the identity \eqref{eq:EquationRC} can be understood as a change of basis between these two bases of the space of intertwining operators.

The fact that the transition coefficients are expressed in terms of Racah polynomials can be seen as a consequence of an action of the Racah algebra, realised as the commutator of $\mathcal{U}(\mathfrak{sl}_2)$ in $\mathcal{U}(\mathfrak{sl}_2)^{\otimes 3}$, on the space of intertwining operators from $V_{\lambda_1}\otimes V_{\lambda_2}\otimes V_{\lambda_3}$ to $V_{\lambda_1+\lambda_2+\lambda_3+2n}$. Or equivalently, it is a consequence of an action of the Racah algebra on the multiplicity spaces. We refer for example to \cite{LabrietPoulain23} where it is shown how this action of the Racah algebra leads to convolution identities for Jacobi polynomials, interpreted as a change of basis in the multiplicity spaces. Roughly speaking, taking the adjoint makes the RC brackets appear naturally and provides the identities presented in this paper.

\paragraph{Acknowledgements.} The authors warmly thank M.Pevzner and G.Zhang for their interest in this work. The first author is supported by a research grant from the Villum Foundation (Grant No. 00025373). The second author is supported by Agence Nationale de la Recherche Projet AHA ANR-18-CE40-0001 and the international research project AAPT of the CNRS.

\section{Tensor product of Verma modules of $\mathfrak{sl}_2$}\label{sec:VermaModules}
In this section, we recall some classical results for the tensor product of Verma modules of $\mathfrak{sl}_2$ and the associated intertwining operators. We gives proofs of some classical results using a realisation of Verma modules on the space of polynomials in one variable combined with some properties of classical Jacobi polynomials. 

\subsection{Lowest weight Verma modules}
We work over the complex field $\C$ and we denote the standard basis $H=\begin{pmatrix}
1&0\\0&-1
\end{pmatrix}$
, $F=\begin{pmatrix}
0&0\\1&0
\end{pmatrix}$ and $E=\begin{pmatrix}
0&1\\0&0
\end{pmatrix}$ of the Lie algebra $\mathfrak{sl}_2$. As usual we consider $H,E,F$ as generators of the universal enveloping algebra $\mathcal{U}(\mathfrak{sl}_2)$. The Casimir operator in $\mathcal{U}(\mathfrak{sl}_2)$ is defined as $C=\frac{1}{4}H^2+\frac{1}{2} H+FE$.

For $\lambda\in \C$, we realise the highest weight Verma module $V_\lambda$ with highest weight $-\lambda$ on the space of polynomials $\Pol(x)$ via the following formulas:
\begin{align*}
&\pi_\lambda(H)P=-\lambda P-2xP',\\
&\pi_\lambda(E)P=-P',\\
&\pi_\lambda(F)P=x^2P'+\lambda xP.
\end{align*}

These representations are known to be irreducible if $\lambda \notin  \Z_{\leq 0}$. Note that if $\lambda=-n \in \Z_{\leq 0}$ then these formulas define an irreducible finite-dimensional representation on the quotient $\Pol(x)/<x^n>$. If $\lambda\in\R_{>0}$ this representation is derived from a unitary representation of the universal cover of $SL_2(\R)$, and from a unitary representation of $SL_2(\R)$ for $\lambda\in \Z_{>0}$ \cite{HoweTan92}.

The contragredient representation $V_\lambda^*$ of $V_\lambda$ is realised on the dual $\Pol(x)^*$ via :
\begin{equation}
\pi_\lambda^*(Z)f(v)=f(-\pi_\lambda(Z)v), \text{ for all } Z\in \mathfrak{sl}_2.
\end{equation}
We identify the dual $\Pol(x)^*$ with $\Pol(x)$ via the Fischer inner product on $\Pol(x)$:
\begin{equation}\label{def:FischerProduct}
\langle P,Q\rangle_F=\left.P(\partial_x)\overline{Q(x)}\right|_{x=0} \quad\text{ or }\quad \langle x^n,x^m\rangle_F =n!\delta_{n,m}.
\end{equation}
and we can realise the contragredient representation on $\Pol(x)$ via the formula:
\begin{equation}
\pi_\lambda^*(Z)P=-\pi_\lambda(Z)^\dagger,
\end{equation}
where $^\dagger$ denotes the adjoint operator with respect to $\langle\cdot,\cdot\rangle_F$. Notice that the $\dagger$ operation exchanges $x$ and $\partial_x$. The module $V_\lambda^*$ is now a lowest weight module of weight $\lambda$ which is explicitly given by the formulas:
\begin{align*}
&\pi^*_\lambda(H)P=\lambda P+2xP',\\
&\pi^*_\lambda(E)P=xP,\\
&\pi^*_\lambda(F)P=-(xP''+\lambda P').
\end{align*}

Before considering the tensor product of two lowest weight Verma modules, we recall some facts about the family of Jacobi polynomials $\{P_\ell^{(\alpha,\beta)}\}$ which can be defined by the following expression \cite{KLS10}:
\begin{equation}
P^{(\alpha,\beta)}_\ell(v)=\frac{(\alpha+1)_\ell}{\ell !}{}_2F_1\left(\begin{array}{c} -\ell\ ,\ 1+\alpha+\beta+\ell\\ \alpha+1 \end{array};\frac{1-v}{2}\right).
\end{equation}
Equivalently they are given by
\begin{equation}
P^{(\alpha,\beta)}_\ell(v)=\frac{1}{2^\ell}\sum_{s=0}^\ell(-1)^s\binom{\ell+\alpha}{\ell-s}\binom{\ell+\beta}{s}(1-v)^s(1+v)^{\ell-s}.
\end{equation}
If $\alpha,\beta \notin \Z_{<0}$ and $\alpha+\beta +1 \notin \Z_{<0}$ then $P^{(\alpha,\beta)}_\ell$ is of degree $\ell$ and thus the family $\{P^{(\alpha,\beta)}_\ell\}$ is a basis of $\Pol(v)$. Define $(\cdot,\cdot)_J$ to be an inner product for which the Jacobi polynomials are orthogonal. For $\alpha,\beta>-1$ it is given by
\begin{equation}
(f,g)_J=\int_{-1}^1f(t)\overline{g(t)}(1-y)^{\alpha}(1+t)^\beta~dt.
\end{equation} 

The Jacobi polynomial $P_\ell^{(\alpha,\beta)}$ is an eigenvector with eigenvalue $-\ell(\ell+\alpha+\beta+1)$ for the Jacobi (or hypergeometric) differential operator
$\Phi^{(\alpha,\beta)}$ defined by:
\begin{equation}
\Phi^{(\alpha,\beta)}=(1-v^2)\partial_v^2+(\alpha-\beta-(\alpha+\beta+2)v)\partial_v.
\end{equation}

\subsection{Tensor product of two lowest weight Verma modules}

We fix $\lambda_1,\lambda_2\in \C\backslash \Z_{\leq 0}$ such that $\lambda_1+\lambda_2 \notin \Z_{\leq 0}$.

We consider the tensor product $V_{\lambda_1}^*\otimes V_{\lambda_2}^*$ of the two lowest weight Verma modules of weight $\lambda_1$ and $\lambda_2\in\C\backslash \Z_{\leq 0}$. Firstly, we use known facts about Jacobi polynomials to recover some well known results about this tensor product (see \cite{HoweTan92} for example). Then this will be used to easily compute intertwining operators between the tensor product and the irreducible components in its direct sum decomposition. 

The tensor product representation $V_{\lambda_1}^*\otimes V_{\lambda_2}^*$ is realised on $\Pol(x,y)$ via:
\begin{align}
&\pi_{\lambda_1}^*\otimes\pi_{\lambda_2}^*(H)P=(\lambda_1+\lambda_2)P+2(x\partial_x+y\partial_y)P,\\
&\pi_{\lambda_1}^*\otimes\pi_{\lambda_2}^*(E)P=(x+y)P\\
&\pi_{\lambda_1}^*\otimes\pi_{\lambda_2}^*(F)P=-\left((x\partial_x^2+y\partial^2_y)P+\lambda_1\partial_xP+\lambda_2\partial_y P\right).
\end{align} 
We now introduce another model in which the Casimir operator of $\mathcal{U}(\mathfrak{sl}_2)$ will have a familiar expression. For this, we use the change of coordinates $(x,y)\mapsto (x+y,\frac{y-x}{x+y})$, with inverse $(t,v)\mapsto (\frac{t(1-v)}{2},\frac{t(1+v)}{2})$. This change of coordinates has a geometrical interpretation in terms of symmetric cones when one considers the tensor product of two holomorphic discrete series representations of $SL_2(\R)$ (see \cite{Labriet22}).

Pulling back this change of coordinates to polynomials we get a map
\begin{equation}
\Psi:P\mapsto P\left(\frac{t(1-v)}{2},\frac{t(1+v)}{2}\right)\ ,
\end{equation}
and we realise the tensor product on the image of $\Psi$, which is simply $\Pol(t,tv)$. More precisely, the following lemma describes the image of $\Psi$ as a direct sum which will lead to the decomposition into irreducible representations of the tensor product.
\begin{lemm}\label{lem:ImagePsi}
For $\lambda_1,\lambda_2\in \C\backslash \Z_{\leq 0}$ such that $\lambda_1+\lambda_2 \notin \Z_{\leq 0}$, we have:
\begin{equation}
Im(\Psi)=\bigoplus_{\ell\geq 0} t^\ell \Pol(t)\otimes \C P_\ell^{(\lambda_1-1,\lambda_2-1)}(v).
\end{equation}
\end{lemm}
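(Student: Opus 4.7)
The plan is to decompose both sides by the total $t$-degree and match graded pieces via the Jacobi polynomial basis of $\Pol(v)$.

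First I would verify the text's identification $Im(\Psi) = \Pol(t,tv)$: since $\Psi(x) = \tfrac{t(1-v)}{2}$ and $\Psi(y) = \tfrac{t(1+v)}{2}$ together with $1$ generate the subalgebra $\C[t,tv]$ of $\Pol(t,v)$, the image is precisely $\Pol(t,tv)$. Grading the monomials $t^a(tv)^b = t^{a+b}v^b$ by $n := a+b$, one gets
\[
Im(\Psi) = \bigoplus_{n \geq 0} t^n \cdot \Pol_{\leq n}(v),
\]
where $\Pol_{\leq n}(v)$ denotes polynomials in $v$ of degree at most $n$.

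Next I would regroup the proposed right-hand side by the same $t$-grading. A typical element $t^\ell q(t) P^{(\lambda_1-1,\lambda_2-1)}_\ell(v)$ of the $\ell$-th summand has $t$-homogeneous components that are scalar multiples of $t^n P^{(\lambda_1-1,\lambda_2-1)}_\ell(v)$ for $n \geq \ell$, so the $t^n$-graded piece of the right-hand side equals $t^n \cdot \mathrm{span}\bigl\{P^{(\lambda_1-1,\lambda_2-1)}_\ell(v)\ :\ 0 \leq \ell \leq n\bigr\}$.

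Under the lemma's hypotheses $\lambda_1,\lambda_2,\lambda_1+\lambda_2 \notin \Z_{\leq 0}$, the parameters $\alpha = \lambda_1-1$, $\beta = \lambda_2-1$, $\alpha+\beta+1 = \lambda_1+\lambda_2-1$ all avoid $\Z_{<0}$, so the text has just recalled that $P^{(\lambda_1-1,\lambda_2-1)}_\ell$ has degree exactly $\ell$. Hence $\{P^{(\lambda_1-1,\lambda_2-1)}_\ell\}_{0\leq \ell \leq n}$ is a basis of $\Pol_{\leq n}(v)$, matching the two graded pieces in each degree $n$, and directness of the sum over $\ell$ follows from the fact that the $P^{(\lambda_1-1,\lambda_2-1)}_\ell$ have pairwise distinct $v$-degrees. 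The only nontrivial point is precisely this degree statement, which is given for free by the parameter hypotheses, so there is no real obstacle.
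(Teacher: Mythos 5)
Your proof is correct and follows essentially the same route as the paper's: both arguments reduce the statement to the fact that, under the hypotheses on $\lambda_1,\lambda_2$, the Jacobi polynomial $P_\ell^{(\lambda_1-1,\lambda_2-1)}$ has degree exactly $\ell$, so that $\{P_\ell^{(\lambda_1-1,\lambda_2-1)}\}_{0\leq\ell\leq n}$ is a basis of the polynomials of degree at most $n$ in $v$. The only cosmetic difference is that you organize the comparison through the $t$-grading of the subalgebra $\C[t,tv]$, whereas the paper exhibits the explicit preimages $Q(x+y)\tilde{P}^{(\lambda_1-1,\lambda_2-1)}_\ell(x,y)$ for one inclusion and expands $\Psi(x^ny^m)$ in the Jacobi basis for the other.
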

\begin{proof}
First, notice that if $\tilde{P}(x,y)$ is a homogeneous polynomial of degree $\ell$ then $\Psi(\tilde{P})(t,v)=\left(\frac{t}{2}\right)^\ell \tilde{P}(1-v,1+v)$. This remark applies to the polynomial 
\begin{equation}\label{eq:JacobiTwoVariables}
\tilde{P}^{(\lambda_1-1,\lambda_2-1)}_\ell(x,y)=\sum_{s=0}^\ell(-1)^s\binom{\ell+\lambda_1-1}{\ell-s}\binom{\ell+\lambda_2-1}{s}x^sy^{\ell-s},
\end{equation}
and shows that $\Psi(\tilde{P}^{(\lambda_1-1,\lambda_2-1)}_\ell)(t,v)=t^\ell P_\ell^{(\lambda_1-1,\lambda_2-1)}(v)$ where $P_\ell^{(\lambda_1-1,\lambda_2-1)}$ is the Jacobi polynomial. Hence, we have for any polynomial $Q$ in one variable:
\begin{equation*}
\Psi(Q(x+y)\tilde{P}_\ell^{(\lambda_1-1,\lambda_2-1)})=t^\ell Q(t)P_\ell^{(\lambda_1-1,\lambda_2-1)}(v),
\end{equation*}
which proves
\begin{equation*}
\sum_{\ell\geq 0} t^\ell \Pol(t)\otimes \C P_\ell^{(\lambda_1-1,\lambda_2-1)}(v)\subset Im(\Psi).
\end{equation*}
Moreover we have:
\begin{equation*}
\Psi(x^ny^m)=\left(\frac{t}{2}\right)^{n+m}(1-v)^n(1+v)^m.
\end{equation*}
Since the Jacobi polynomials are a basis for $\Pol(v)$ under our assumptions on $\lambda_1$ and $\lambda_2$, we can expand $(1-v)^n(1+v)^m$ in this basis which shows the opposite inclusion.
\end{proof}

In the coordinates $(t,v)$ one gets by direct calculation the following formulas for the representation:
\begin{align}\label{eq:ModeleVermaModule(t,v)}
&\pi_{\lambda_1}^*\otimes\pi_{\lambda_2}^*(H)P=(\lambda_1+\lambda_2)P+2t\partial_t P,\\
&\pi_{\lambda_1}^*\otimes\pi_{\lambda_2}^*(E)P=tP,\\
&\pi_{\lambda_1}^*\otimes\pi_{\lambda_2}^*(F)P=-\left(t\partial_t^2 P+(\lambda_1+\lambda_2)\partial_t P +t^{-1}\Phi^{(\lambda_1-1,\lambda_2-1)}_v P\right),
\end{align}
where $\Phi^{(\lambda_1-1,\lambda_2-1)}_v $ is the Jacobi differential operator applied in the variable $v$. Using the explicit formulas for $\pi_{\lambda_1}^*\otimes\pi_{\lambda_2}^*$ the following result gives the action of the Casimir element. The key point being that the Casimir operator is now an ordinary differential operator of order two only with derivatives in $v$. 
\begin{lemm}
In the new coordinates $(t,v)$, we have
\begin{equation}
\pi_{\lambda_1}^*\otimes\pi_{\lambda_2}^*(C)=\frac{(\lambda_1+\lambda_2)(\lambda_1+\lambda_2-2)}{4}-\Phi^{(\lambda_1-1,\lambda_2-1)}_v.
\end{equation}
\end{lemm}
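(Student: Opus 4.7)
The plan is to verify the identity by direct substitution of the formulas for $H$, $E$, $F$ given in display~\eqref{eq:ModeleVermaModule(t,v)} into the definition $C=\tfrac14 H^2+\tfrac12 H+FE$. Writing $\mu=\lambda_1+\lambda_2$ to lighten notation, I would expand each of the three summands separately as a differential operator in $(t,v)$ with polynomial coefficients. The one conceptual point to address first is the meaning of the formally singular term $t^{-1}\Phi^{(\lambda_1-1,\lambda_2-1)}_v$ appearing in $\pi_{\lambda_1}^*\otimes\pi_{\lambda_2}^*(F)$: since the Jacobi operator $\Phi^{(\lambda_1-1,\lambda_2-1)}_v$ contains only $v$-derivatives, it commutes with multiplication by $t$, so $t^{-1}\Phi^{(\lambda_1-1,\lambda_2-1)}_v(tP)=\Phi^{(\lambda_1-1,\lambda_2-1)}_v P$ and no singularity actually arises when computing $FE$.

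After that, the computation is pure bookkeeping. A short calculation shows that $\tfrac14 H^2$ contributes $\tfrac{\mu^2}{4}+(\mu+1)\,t\partial_t+t^2\partial_t^2$, $\tfrac12 H$ contributes $\tfrac{\mu}{2}+t\partial_t$, and $FE$, applied via first multiplying by $t$ and using $\partial_t^2(tP)=2\partial_t P+t\partial_t^2 P$, contributes $-\mu-(\mu+2)\,t\partial_t-t^2\partial_t^2-\Phi^{(\lambda_1-1,\lambda_2-1)}_v$. Summing these three contributions, the coefficients of $t\partial_t$ cancel via $(\mu+1)+1-(\mu+2)=0$, the coefficients of $t^2\partial_t^2$ cancel trivially, the scalar parts combine to $\tfrac{\mu^2}{4}+\tfrac{\mu}{2}-\mu=\tfrac{\mu(\mu-2)}{4}$, and the only remaining $v$-dependence is $-\Phi^{(\lambda_1-1,\lambda_2-1)}_v$. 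This gives exactly the claimed formula.

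There is no real obstacle beyond disciplined bookkeeping; the statement is essentially a calculation check. The conceptual content worth highlighting is that all derivatives in $t$ disappear from the Casimir in these coordinates, which is precisely the reason for introducing the change of variables $(x,y)\mapsto(t,v)$: combined with Lemma~\ref{lem:ImagePsi}, it will reduce the spectral decomposition of $V_{\lambda_1}^*\otimes V_{\lambda_2}^*$ under the Casimir to the classical eigenvalue problem for $\Phi^{(\lambda_1-1,\lambda_2-1)}_v$ acting on $\Pol(v)$, which is solved by the Jacobi polynomials $P_\ell^{(\lambda_1-1,\lambda_2-1)}$.
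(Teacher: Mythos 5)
Your computation is correct and is exactly the direct verification the paper intends (the lemma is stated there without proof, as an immediate consequence of the explicit formulas in the $(t,v)$ coordinates): the three contributions from $\tfrac14H^2$, $\tfrac12H$ and $FE$ check out, the $t$-derivatives cancel, and the observation that $t^{-1}\Phi^{(\lambda_1-1,\lambda_2-1)}_v(tP)=\Phi^{(\lambda_1-1,\lambda_2-1)}_vP$ correctly disposes of the formally singular term.
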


We now show that spaces of the form $t^\ell \Pol(t)\otimes \C P_\ell^{(\la_1-1,\la_2-1)}$ are irreducible submodules for the tensor product. More precisely we have the following theorem.
\begin{theo}\label{thm:TensorVermaModuleDecomposition}
Let $\lambda_1,\lambda_2\in \C\backslash \Z_{\leq 0}$ such that $\lambda_1+\lambda_2\notin \Z_{\leq 0}$ and $\ell \in \N$.

The space $t^\ell\cdot \Pol(t)\otimes \C P_\ell^{(\lambda_1-1,\lambda_2-1)}(v)$ is an irreducible $\mathfrak{sl}_2$-submodule isomorphic to the lowest weight module $V_{\lambda_1+\lambda_2+2\ell}^*$ with lowest weight vector $t^\ell P_\ell^{(\lambda_1-1,\lambda_2-1)}(v)$.

This gives the following decomposition into irreducible $\mathfrak{sl}_2$-modules.
\begin{equation}
V_{\lambda_1}^*\otimes V_{\lambda_2}^*\simeq \bigoplus_{\ell\geq 0}V_{\lambda_1+\lambda_2+2\ell}^*
\end{equation}

\end{theo}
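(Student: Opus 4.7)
The plan is to work entirely in the $(t,v)$ coordinates and verify directly that each candidate subspace $W_\ell := t^\ell\Pol(t)\otimes \C P_\ell^{(\lambda_1-1,\lambda_2-1)}(v)$ is a cyclic lowest weight $\mathfrak{sl}_2$-module with lowest weight vector $t^\ell P_\ell^{(\lambda_1-1,\lambda_2-1)}(v)$. Stability of $W_\ell$ under $H$ and $E$ is transparent from \eqref{eq:ModeleVermaModule(t,v)} since $H$ acts diagonally in the grading by powers of $t$ and $E$ multiplies by $t$. The essential point is stability under $F$: applied to $t^\ell R(t)P_\ell^{(\lambda_1-1,\lambda_2-1)}(v)$ with $R\in\Pol(t)$ arbitrary, the term $t^{-1}\Phi^{(\lambda_1-1,\lambda_2-1)}_v$ evaluates to $-\ell(\ell+\lambda_1+\lambda_2-1)t^{\ell-1}R(t)P_\ell^{(\lambda_1-1,\lambda_2-1)}(v)$ by the Jacobi eigenvalue equation recalled earlier. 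Adding the contributions from $t\partial_t^2(t^\ell R)$ and $(\lambda_1+\lambda_2)\partial_t(t^\ell R)$, the coefficient in front of $t^{\ell-1}R$ becomes $\ell(\ell-1)+(\lambda_1+\lambda_2)\ell-\ell(\ell+\lambda_1+\lambda_2-1)=0$, so the negative power of $t$ cancels and the image stays in $W_\ell$.

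Specialising the same calculation to $R=1$ gives $\pi_{\lambda_1}^*\otimes\pi_{\lambda_2}^*(F)(t^\ell P_\ell^{(\lambda_1-1,\lambda_2-1)})=0$, while the formula for $H$ yields eigenvalue $\lambda_1+\lambda_2+2\ell$ on this vector. Hence $t^\ell P_\ell^{(\lambda_1-1,\lambda_2-1)}(v)$ is a lowest weight vector of the expected weight. Since $E$ acts by multiplication by $t$, applying $E^k$ produces $t^{\ell+k}P_\ell^{(\lambda_1-1,\lambda_2-1)}(v)$, and these vectors span $W_\ell$, so $W_\ell$ is cyclically generated by its lowest weight vector. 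By the universal property of Verma modules, there is a surjective $\mathfrak{sl}_2$-equivariant map $V^*_{\lambda_1+\lambda_2+2\ell}\twoheadrightarrow W_\ell$. The hypothesis $\lambda_1+\lambda_2\notin\Z_{\leq 0}$ forces $\lambda_1+\lambda_2+2\ell\notin\Z_{\leq 0}$ for every $\ell\geq 0$ (either $\lambda_1+\lambda_2\notin\Z$, or it is a positive integer, and in both cases adding $2\ell$ keeps us out of $\Z_{\leq 0}$). Consequently $V^*_{\lambda_1+\lambda_2+2\ell}$ is irreducible, the surjection must be an isomorphism, and $W_\ell$ is itself an irreducible submodule isomorphic to $V^*_{\lambda_1+\lambda_2+2\ell}$.

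The direct sum decomposition of $V^*_{\lambda_1}\otimes V^*_{\lambda_2}$ is finally obtained by transporting the decomposition of $Im(\Psi)$ furnished by Lemma \ref{lem:ImagePsi} back through the $\mathfrak{sl}_2$-equivariant isomorphism $\Psi$. The main, and indeed only, computational obstacle is the coefficient cancellation in the $F$-invariance check; once this is observed, everything else is structural and relies solely on the Jacobi polynomials being eigenvectors of $\Phi^{(\lambda_1-1,\lambda_2-1)}_v$.
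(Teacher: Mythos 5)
Your proof is correct, but it takes a different route from the one in the paper, which is worth comparing. The paper's argument is spectral: it invokes the preceding lemma expressing the Casimir in the $(t,v)$ coordinates as $\tfrac{(\lambda_1+\lambda_2)(\lambda_1+\lambda_2-2)}{4}-\Phi_v^{(\lambda_1-1,\lambda_2-1)}$, observes that each $t^\ell\Pol(t)\otimes\C P_\ell^{(\lambda_1-1,\lambda_2-1)}(v)$ is a Casimir eigenspace with eigenvalue determined by $\ell$, and uses the distinctness of these eigenvalues (guaranteed by the hypotheses on $\lambda_1+\lambda_2$) to conclude that each summand is a submodule; irreducibility and the identification with $V^*_{\lambda_1+\lambda_2+2\ell}$ are then argued by inspecting highest and lowest weight vectors. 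You instead bypass the Casimir entirely and verify submodule stability by hand, the only nontrivial point being the cancellation $\ell(\ell-1)+(\lambda_1+\lambda_2)\ell-\ell(\ell+\lambda_1+\lambda_2-1)=0$ of the $t^{\ell-1}$ term in the $F$-action, which is exactly right. Your identification of $W_\ell$ with the Verma module --- cyclicity under $E$ from the lowest weight vector, the universal property giving a surjection $V^*_{\lambda_1+\lambda_2+2\ell}\twoheadrightarrow W_\ell$, and irreducibility of the source forcing an isomorphism --- is arguably cleaner and more self-contained than the paper's ``no highest weight vector, unique lowest weight vector'' argument, whose last step is left somewhat implicit. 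What the paper's Casimir approach buys is that submodule stability comes for free from centrality, and it makes the role of the Jacobi differential operator conceptually transparent; what your approach buys is elementarity (no need for the Casimir lemma) and a sharper justification of irreducibility. Both conclude identically by transporting the decomposition of $Im(\Psi)$ from Lemma \ref{lem:ImagePsi}.
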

\begin{proof}
Using our remarks about Jacobi polynomials, one can show by a direct computation that the space $t^\ell\cdot Pol(t)\otimes \C P_\ell^{(\lambda_1-1,\lambda_2-1)}(v)$ is an eigenspace for the Casimir operator for the eigenvalue $(\lambda_1+\lambda_2+2\ell)(\lambda_1+\lambda_2+2\ell-2)$. Note that the conditions on $\lambda_1,\lambda_2$ ensure that these eigenvalues are all different. Thus it is a $\mathfrak{sl}_2$-submodule for the tensor product. This submodule does not have any highest weight vector according to the $E$-action. Moreover, one can check explicitly that $t^\ell P_\ell^{(\lambda_1-1,\lambda_2-1)}(v)$ is the only lowest weight vector if $\lambda_1+\lambda_2\notin \Z_{\leq 0}$. This proves that the submodule is isomorphic to $V_{\lambda_1+\lambda_2+2\ell}^*$. Finally, Lemma \ref{lem:ImagePsi} proves the decomposition.
\end{proof}

A direct consequence of Theorem \ref{thm:TensorVermaModuleDecomposition} is the following corollary.
\begin{coro}\label{coro;Intertwining}
Let $\lambda_1,\lambda_2\in \C\backslash \Z_{\leq 0}$ such that $\lambda_1+\lambda_2\notin \Z_{\leq 0}$ and $\ell \in \N$.

The map $\Phi_\ell^{\lambda_1,\lambda_2}: \Pol(t)\to \Pol(t,v)$ given by the formula:
\begin{equation}
\Phi_\ell^{\lambda_1,\lambda_2} Q(t,v)=t^\ell Q(t)P_\ell^{(\lambda_1-1,\lambda_2-1)}(v).
\end{equation}
defines an intertwining operator between $V^*_{\lambda_1+\lambda_2+2\ell}$ and $V_{\lambda_1}^*\otimes V_{\lambda_2}^*$.

The counterpart of this formula in the variables $(x,y)$ is given by the map $\widetilde{\Phi}_\ell^{\lambda_1,\lambda_2}:\Pol(t)\to \Pol(x,y)$ defined by the formula:
\begin{equation}
\widetilde{\Phi}_\ell^{\lambda_1,\lambda_2} Q(x,y)=(x+y)^\ell P_\ell^{(\lambda_1-1,\lambda_2-1)}\left(\frac{y-x}{x+y}\right)Q(x+y).
\end{equation}
\end{coro}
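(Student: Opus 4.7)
The plan is to deduce the corollary directly from Theorem~\ref{thm:TensorVermaModuleDecomposition}, which identifies the subspace $t^\ell\Pol(t)\otimes \C P_\ell^{(\la_1-1,\la_2-1)}(v)$ as an irreducible submodule of $V_{\la_1}^*\otimes V_{\la_2}^*$, isomorphic to $V_{\la_1+\la_2+2\ell}^*$, with lowest weight vector $t^\ell P_\ell^{(\la_1-1,\la_2-1)}(v)$. Since $\Phi_\ell^{\la_1,\la_2}$ is visibly a linear bijection from $\Pol(t)$ onto this submodule, only the $\mathfrak{sl}_2$-equivariance needs to be checked.

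The cleanest argument invokes Schur's lemma: Theorem~\ref{thm:TensorVermaModuleDecomposition} supplies a (nonzero) intertwiner $\phi:V_{\la_1+\la_2+2\ell}^*\to V_{\la_1}^*\otimes V_{\la_2}^*$ with image in the submodule above, and $\phi$ is unique up to a scalar since $V_{\la_1+\la_2+2\ell}^*$ appears in the decomposition with multiplicity one. Normalise it so that $\phi(1)=t^\ell P_\ell^{(\la_1-1,\la_2-1)}(v)$, where $1\in\Pol(t)$ is the lowest weight vector of the source. It then suffices to show $\Phi_\ell^{\la_1,\la_2}=\phi$. Because any lowest weight module is generated from its lowest weight vector by repeated application of $E$, this reduces to verifying (i) $\Phi_\ell^{\la_1,\la_2}(1)=\phi(1)$, which holds by construction, and (ii) the $E$-equivariance $\Phi_\ell^{\la_1,\la_2}(tQ)=t\,\Phi_\ell^{\la_1,\la_2}(Q)$. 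The latter is tautological: on the source $\pi^*_{\la_1+\la_2+2\ell}(E)$ is multiplication by $t$ (after renaming the source variable), and on the target $\pi^*_{\la_1}\otimes\pi^*_{\la_2}(E)$ is also multiplication by $t$ in the $(t,v)$-model by \eqref{eq:ModeleVermaModule(t,v)}.

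The main---indeed only---obstacle is establishing that $t^\ell P_\ell^{(\la_1-1,\la_2-1)}(v)$ is a lowest weight vector of the correct weight $\la_1+\la_2+2\ell$, which has already been handled inside the proof of Theorem~\ref{thm:TensorVermaModuleDecomposition} by means of the Jacobi eigenvalue equation $\Phi_v^{(\la_1-1,\la_2-1)}P_\ell^{(\la_1-1,\la_2-1)}=-\ell(\ell+\la_1+\la_2-1)P_\ell^{(\la_1-1,\la_2-1)}$. Finally, the formula for $\widetilde{\Phi}_\ell^{\la_1,\la_2}$ is obtained by composing $\Phi_\ell^{\la_1,\la_2}$ with the inverse of the change of variables $\Psi$, i.e.\ substituting $t=x+y$ and $v=(y-x)/(x+y)$ in $t^\ell Q(t)P_\ell^{(\la_1-1,\la_2-1)}(v)$, which immediately yields the stated expression.
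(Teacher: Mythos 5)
Your proposal is correct and follows the same route as the paper, which simply states the corollary as a direct consequence of Theorem~\ref{thm:TensorVermaModuleDecomposition}; you have merely made the deduction explicit (multiplicity one, matching on the lowest weight vector $1\mapsto t^\ell P_\ell^{(\lambda_1-1,\lambda_2-1)}(v)$, and $E$-equivariance, which suffices since $\Pol(t)$ is spanned by $E^n\cdot 1=t^n$). The passage to $\widetilde{\Phi}_\ell^{\lambda_1,\lambda_2}$ via the substitution $t=x+y$, $v=(y-x)/(x+y)$ is likewise exactly what the paper intends.
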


\section{Intertwining operators and proof of the main theorem}\label{sec:IntertwiningOperator}

We are ready to interpret the RC brackets as intertwining operators for highest weight Verma modules. The intertwining operator between $V_{\lambda_1}\otimes V_{\lambda_2}$ and $V_{\lambda_1+\lambda_2+2\ell}$ is given by the adjoint of the operator $\widetilde{\Phi}^{\lambda_1,\lambda_2}_\ell$ obtained in Corollary \ref{coro;Intertwining}. The adjoint is taken with respect to the Fischer inner product (naturally extended to $\Pol(x,y)$). We show that this adjoint coincides with the RC brackets. 

To compute the adjoint of $\widetilde{\Phi}_\ell^{\lambda_1,\lambda_2}$ we introduce the maps
\begin{align*}
\Delta: P\in \Pol(z)&\mapsto P(x+y)\in \Pol(x,y),\\
 m(Q): P\in \Pol(x,y)&\mapsto Q(x,y)P(x,y)\in \Pol(x,y),
\end{align*}
where $Q\in \Pol(x,y)$.

\begin{lemm}
We have:
\begin{align}
\Delta^\dagger: P\in \Pol(x,y) &\mapsto P(z,z)\in \Pol(z),\\
 m(Q)^\dagger: P\in \Pol(x,y)&\mapsto Q(\partial_x,\partial_y)P(x,y)\in \Pol(x,y).
\end{align}
\end{lemm}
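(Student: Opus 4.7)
The plan is to verify both adjoint formulas by testing them on monomial bases, using the defining feature of the Fischer inner product: in one variable it satisfies $x^\dagger = \partial_x$, as already noted in the excerpt, and the same property extends verbatim to two variables (with $y^\dagger = \partial_y$ as well), since $\langle\cdot,\cdot\rangle_F$ on $\Pol(x,y)$ is just the obvious tensor product on monomials.

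For $m(Q)^\dagger$: I would first reduce to the case of a monomial $Q(x,y) = x^a y^b$ by $\C$-linearity of $Q \mapsto m(Q)$. Since $x,y,\partial_x,\partial_y$ pairwise commute apart from the two Weyl relations $[\partial_x, x] = [\partial_y, y] = 1$, and since $\dagger$ is an anti-involution, iterating the basic duality $x^\dagger=\partial_x$, $y^\dagger=\partial_y$ gives $m(x^a y^b)^\dagger = \partial_x^a \partial_y^b = Q(\partial_x, \partial_y)$. Reassembling by linearity in $Q$ yields the general formula.

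For $\Delta^\dagger$: I would check the identity $\langle \Delta P, R\rangle_F = \langle P, \Delta^\dagger R\rangle_F$ on $P(z) = z^n$ and $R(x,y) = x^a y^b$, which suffices by (sesqui)linearity and density of monomials. The left-hand side is $\langle (x+y)^n, x^a y^b\rangle_F$, and binomial expansion combined with the orthogonality relation $\langle x^i y^j, x^a y^b\rangle_F = i!\,j!\,\delta_{i,a}\delta_{j,b}$ gives $\binom{n}{a} a!\, b!\, \delta_{n,a+b} = n!\,\delta_{n,a+b}$. With the claimed formula $\Delta^\dagger R(z) = R(z,z) = z^{a+b}$, the right-hand side is $\langle z^n, z^{a+b}\rangle_F = n!\,\delta_{n,a+b}$, and the two agree.

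No genuine obstacle arises: the only combinatorial input is the identity $\binom{n}{a} a!\, (n-a)! = n!$. One could alternatively frame the argument conceptually — the adjoint of a multiplication operator is a differential operator in the duals, and the adjoint of a pullback along $z \mapsto (x+y)$ is a pushforward along the diagonal $\{x=y\}$ — but the direct monomial verification is the most economical route.
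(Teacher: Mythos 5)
Your proposal is correct and follows essentially the same route as the paper: the paper likewise verifies $\Delta^\dagger$ by pairing $\Delta(z^{p+q})$ against the monomials $x^p y^q$ (the same binomial computation giving $(p+q)!$) and disposes of $m(Q)^\dagger$ in one line by noting that $\dagger$ exchanges $x$ with $\partial_x$ and $y$ with $\partial_y$. Your write-up is just a slightly more detailed version of the same monomial verification.
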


\begin{proof}
First, we have:
\begin{align*}
\langle\Delta (z^{p+q}),x^p y^q\rangle_F &=\sum_{k=0}^{p+q}\binom{p+q}{k}\langle x^k y^{p+q-k}, x^py^q\rangle_F\\
&=(p+q)! \\
&=\langle z^{p+q},z^{p+q}\rangle_F.
\end{align*}
Moreover, $\langle\Delta (z^n),x^p y^q\rangle_F=0=\langle z^n,z^{p+q}\rangle_F$ if $n\neq p+q$, hence  $\Delta^\dagger P=P(z,z)$.

The second equality follows from the fact that the $\dagger$ operation exchanges $x$ and $\partial_x$.
\end{proof}

Finally, notice that:
\[ \widetilde{\Phi}^{\lambda_1,\lambda_2}_\ell=m\left((x+y)^\ell P_\ell^{(\lambda_1-1,\lambda_2-1)}\left(\frac{y-x}{x+y}\right)\right) \circ \Delta,\] 
and that $(x+y)^\ell P_\ell^{(\lambda_1-1,\lambda_2-1)}(\frac{y-x}{x+y})$ is the polynomial defined in \eqref{eq:JacobiTwoVariables}. Thus the adjoint operator of $\widetilde{\Phi}_\ell^{\lambda_1,\lambda_2}$ is given by the RC bracket as stated in the following theorem.

\begin{theo}\label{thm:RCasSBO}
For $\lambda_1,\lambda_2 \in \C\backslash\Z_{\leq 0}$ such that $\lambda_1+\lambda_2 \notin \Z_{\leq 0}$ and $P\in \Pol(x,y)$ we have:
\begin{equation}
\left(\widetilde{\Phi}^{\lambda_1,\lambda_2}_\ell\right)^\dagger P(z)=\sum_{s=0}^\ell (-1)^s\binom{\ell+\lambda_1-1}{\ell-s}\binom{\ell+\lambda_2-1}{s}\frac{\partial^\ell P}{\partial^{s}_x\partial^{\ell-s}_y}(z,z).
\end{equation}
Hence the RC bracket $\RC{\lambda_1}{\lambda_2}{\ell}$ is an intertwining operator between $V_{\lambda_1}\otimes V_{\lambda_2}$ and $V_{\lambda_1+\lambda_2+2\ell}$.
\end{theo}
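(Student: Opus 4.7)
The plan is to compute $\left(\widetilde{\Phi}^{\lambda_1,\lambda_2}_\ell\right)^\dagger$ directly from the factorisation $\widetilde{\Phi}^{\lambda_1,\lambda_2}_\ell = m\bigl(\tilde{P}^{(\lambda_1-1,\lambda_2-1)}_\ell\bigr) \circ \Delta$ observed just above the theorem statement. Since the Fischer adjoint reverses composition, one has $\left(\widetilde{\Phi}^{\lambda_1,\lambda_2}_\ell\right)^\dagger = \Delta^\dagger \circ m\bigl(\tilde{P}^{(\lambda_1-1,\lambda_2-1)}_\ell\bigr)^\dagger$. The preceding lemma then tells us that $m(Q)^\dagger$ converts the polynomial $Q(x,y)$ into the differential operator $Q(\partial_x,\partial_y)$, while $\Delta^\dagger$ restricts a two-variable polynomial to the diagonal $x=y=z$. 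Plugging in the explicit expansion \eqref{eq:JacobiTwoVariables} of $\tilde{P}^{(\lambda_1-1,\lambda_2-1)}_\ell$ immediately yields the formula displayed in the theorem.

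To match the RC bracket definition \eqref{def:RC}, I would specialise $P(x,y)=f(x)g(y)$, so that $\partial_x^s\partial_y^{\ell-s} P = f^{(s)}(x)\,g^{(\ell-s)}(y)$; evaluating at $x=y=z$ then reproduces the sum defining $\RC{\lambda_1}{\lambda_2}{\ell}(f,g)(z)$ term by term, under the standard reading of $\partial^{(s)}$ as $\partial^s$.

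For the intertwining statement, I would start from the intertwining relation for $\widetilde{\Phi}^{\lambda_1,\lambda_2}_\ell$ provided by Corollary \ref{coro;Intertwining} and take the Fischer adjoint of both sides. Using $\pi_\lambda^*(Z) = -\pi_\lambda(Z)^\dagger$ on each of the three factors $\lambda_1,\lambda_2,\lambda_1+\lambda_2+2\ell$, the two resulting minus signs cancel and one obtains
\[ \pi_{\lambda_1+\lambda_2+2\ell}(Z) \circ \bigl(\widetilde{\Phi}^{\lambda_1,\lambda_2}_\ell\bigr)^\dagger = \bigl(\widetilde{\Phi}^{\lambda_1,\lambda_2}_\ell\bigr)^\dagger \circ (\pi_{\lambda_1}\otimes\pi_{\lambda_2})(Z) \]
for every $Z\in\mathfrak{sl}_2$. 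Combined with the explicit formula just derived, this identifies $\RC{\lambda_1}{\lambda_2}{\ell}$ as an $\mathfrak{sl}_2$-intertwining operator from $V_{\lambda_1}\otimes V_{\lambda_2}$ to $V_{\lambda_1+\lambda_2+2\ell}$.

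There is no substantial obstacle here, since once the factorisation of $\widetilde{\Phi}^{\lambda_1,\lambda_2}_\ell$ and the two adjoint computations from the preceding lemma are in hand the argument is a routine unwinding of definitions. The only point requiring care is the sign bookkeeping when passing from contragredient to standard representations via the Fischer adjoint, which is why the relation $\pi_\lambda^*(Z) = -\pi_\lambda(Z)^\dagger$ must be applied symmetrically on both source and target.
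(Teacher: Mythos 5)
Your argument is correct and follows essentially the same route as the paper: the factorisation $\widetilde{\Phi}^{\lambda_1,\lambda_2}_\ell=m\bigl(\tilde{P}^{(\lambda_1-1,\lambda_2-1)}_\ell\bigr)\circ\Delta$, the adjoint computations $\Delta^\dagger$ and $m(Q)^\dagger$ from the preceding lemma, and the expansion \eqref{eq:JacobiTwoVariables} are exactly the ingredients the paper combines to obtain the displayed formula, and the intertwining claim is likewise obtained there by dualising Corollary \ref{coro;Intertwining} via $\pi_\lambda^*(Z)=-\pi_\lambda(Z)^\dagger$. Your explicit sign bookkeeping for the contragredient representations is a welcome elaboration of a step the paper leaves implicit, but it is not a different method.
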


\subsection{Proof of Theorem \ref{thm:EquationRC}}

From now on, we fix $n\in \N$ and $\lambda_1,\lambda_2,\lambda_3\in \C$ satisfying the condition
\begin{equation}\label{conditionParametre}
\lambda_1,\ \lambda_2,\ \lambda_3 \notin \Z_{\leq 0} \quad \text{and} \quad \lambda_1+\lambda_2,\ \lambda_2+\lambda_3,\ \lambda_1+\lambda_2+\lambda_3 \notin \Z_{\leq 0}.
\end{equation}

The preceding sections have interpreted the RC brackets as intertwining operators for highest weight Verma modules, and therefore as adjoint to the interwining operators for lowest weight Verma modules. Thus it is clear that there must be an equivalent version of Theorem \ref{thm:EquationRC} for these other intertwining operators. This equivalent version reads as the following convolution formula for Jacobi polynomials which is known (see \cite{KVJ98} or \cite{LabrietPoulain23}).
\begin{theo}\label{theo:ConvolutionRacah}
We have, for $k=0,\cdots,n$, and $\lambda_1,\lambda_2,\lambda_3$ satisfying \eqref{conditionParametre}
\begin{multline}\label{eq;ConvolutionRacah}
(x+y+z)^{n-k}(x+y)^k P_{n-k}^{(\lambda_1+\lambda_2+2k-1,\lambda_3-1)}\left(\frac{z-x-y}{x+y+z}\right)P_k^{(\lambda_1-1,\lambda_2-1)}\left(\frac{y-x}{x+y}\right)=\\
\sum_{p=0}^n
U^{\lambda_1,\lambda_2;k}_{\lambda_3;n,p}(x+y+z)^{n-p}(y+z)^p  P_{n-p}^{(\lambda_1-1,\lambda_2+\lambda_3+2p-1)}\left(\frac{y+z-x}{x+y+z}\right)P_p^{(\lambda_2-1,\lambda_3-1)}\left(\frac{z-y}{y+z}\right).
\end{multline}
\end{theo}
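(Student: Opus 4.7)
The plan is to interpret \eqref{eq;ConvolutionRacah} as the change-of-basis formula, evaluated on the constant polynomial $1$, between two natural bases of $\mathfrak{sl}_2$-intertwining operators into the triple tensor product $V^*_{\lambda_1}\otimes V^*_{\lambda_2}\otimes V^*_{\lambda_3}$. Using Corollary~\ref{coro;Intertwining} together with its tensor-product extensions $\widetilde{\Phi}^{\lambda_1,\lambda_2}_k\otimes Id$ and $Id\otimes\widetilde{\Phi}^{\lambda_2,\lambda_3}_p$, I would first check by direct substitution that the left-hand side of \eqref{eq;ConvolutionRacah} equals $\bigl[(\widetilde{\Phi}^{\lambda_1,\lambda_2}_k\otimes Id)\circ\widetilde{\Phi}^{\lambda_1+\lambda_2+2k,\lambda_3}_{n-k}\bigr](1)$ and that the $p$-th summand on the right-hand side, without the scalar $U^{\lambda_1,\lambda_2;k}_{\lambda_3;n,p}$, equals $\bigl[(Id\otimes\widetilde{\Phi}^{\lambda_2,\lambda_3}_p)\circ\widetilde{\Phi}^{\lambda_1,\lambda_2+\lambda_3+2p}_{n-p}\bigr](1)$. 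Both of these are $\mathfrak{sl}_2$-intertwining operators from $V^*_{\lambda_1+\lambda_2+\lambda_3+2n}$ into the triple tensor product.

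Iterating Theorem~\ref{thm:TensorVermaModuleDecomposition} shows that $V^*_{\lambda_1+\lambda_2+\lambda_3+2n}$ appears with multiplicity $n+1$ in $V^*_{\lambda_1}\otimes V^*_{\lambda_2}\otimes V^*_{\lambda_3}$, so the intertwining space is $(n+1)$-dimensional and each of the two families parametrized by $k$ and by $p$ is a basis of it. Consequently, the existence of unique transition coefficients is automatic, and the convolution identity is precisely the statement that the $(k,p)$-entry of this transition matrix equals $U^{\lambda_1,\lambda_2;k}_{\lambda_3;n,p}$.

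To identify these entries as the stated Racah polynomials, I would exploit that the $k$-basis is the eigenbasis, on the multiplicity space, of the intermediate Casimir $C_{12}$ acting on the first two factors (through $\pi^*_{\lambda_1}\otimes\pi^*_{\lambda_2}\otimes Id$), while the $p$-basis is the eigenbasis of $C_{23}$ acting on the last two factors. Together $C_{12}$ and $C_{23}$ generate a representation of the Racah algebra on the $(n+1)$-dimensional multiplicity space, and the general spectral theory of that algebra identifies the overlap coefficients between the two eigenbases as Racah polynomials in the ${}_4F_3$ form appearing in \eqref{eq:FormulaCoeff}. The main obstacle, and the reason this identity is most cleanly quoted from \cite{KVJ98} or \cite{LabrietPoulain23}, is the careful bookkeeping needed to match the intrinsic Racah-polynomial normalization with the explicit normalizations built into $\widetilde{\Phi}^{\lambda,\mu}_\ell$; pragmatically one fixes this prefactor by evaluating both sides at a single specialization, for instance comparing the coefficient of $z^n$ on both sides (which isolates the $p=n$ term on the right) and then invoking the three-term recurrence satisfied by the Racah polynomials to propagate the normalization to all remaining values of $p$.
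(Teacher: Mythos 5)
Note first that the paper itself offers no proof of Theorem~\ref{theo:ConvolutionRacah}: it is quoted as known from \cite{KVJ98} and \cite{LabrietPoulain23}, and the remark following the proof of Theorem~\ref{thm:EquationRC} explains that the argument in \cite{LabrietPoulain23} is exactly the Racah-algebra action on multiplicity spaces that you describe. Your set-up is correct and matches \eqref{eq:ConvolutionIdentity}: both sides of \eqref{eq;ConvolutionRacah} are indeed the images of $1$ under the two compositions of intertwining operators, the multiplicity space is $(n+1)$-dimensional under \eqref{conditionParametre}, the two families are the eigenbases of the intermediate Casimirs $C_{12}$ and $C_{23}$ (with distinct eigenvalues, so they really are bases), and therefore the transition coefficients exist and are unique. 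But be aware that the entire content of the theorem is the closed form \eqref{eq:FormulaCoeff}, and your appeal to ``the general spectral theory of the Racah algebra'' black-boxes precisely the computation that \cite{KVJ98,LabrietPoulain23} carry out; as a proof this is an outline of the strategy rather than the argument itself.

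There is also a concrete error in your normalization step. Each summand on the right-hand side of \eqref{eq;ConvolutionRacah} is homogeneous of degree $n$, so the coefficient of $z^n$ is the value at $(x,y,z)=(0,0,1)$, which for the $p$-th summand equals $P_{n-p}^{(\lambda_1-1,\lambda_2+\lambda_3+2p-1)}(1)\,P_p^{(\lambda_2-1,\lambda_3-1)}(1)=\binom{n-p+\lambda_1-1}{n-p}\binom{p+\lambda_2-1}{p}$; under \eqref{conditionParametre} this is nonzero for every $p$, so the coefficient of $z^n$ does \emph{not} isolate the $p=n$ term --- all $n+1$ terms contribute. What does work is the $x$-degree: the factor $(y+z)^p P_p^{(\lambda_2-1,\lambda_3-1)}\bigl(\frac{z-y}{y+z}\bigr)$ is independent of $x$ and the factor $(x+y+z)^{n-p}P_{n-p}^{(\lambda_1-1,\lambda_2+\lambda_3+2p-1)}\bigl(\frac{y+z-x}{x+y+z}\bigr)$ has $x$-degree exactly $n-p$ with nonzero top coefficient $(-1)^{n-p}\binom{n+p+\lambda_2+\lambda_3-1}{n-p}$, so the system is triangular in the $x$-degree filtration and the coefficient of $x^n$ isolates $p=0$ (one checks it gives $U^{\lambda_1,\lambda_2;k}_{\lambda_3;n,0}=\binom{n}{k}(\lambda_2)_k(\lambda_3)_{n-k}/(\lambda_2+\lambda_3)_n$, consistent with \eqref{eq:FormulaCoeff} since $R_{0,k}=1$). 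Starting the recurrence from $p=0$ rather than $p=n$, your propagation argument can then be run; alternatively the triangularity itself determines all the $U$'s recursively without the Racah algebra, at the cost of then having to recognize the answer as the stated ${}_4F_3$.
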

Formula \eqref{eq;ConvolutionRacah} translates at once into a formula for the intertwining operators $\widetilde{\Phi}_k^{\lambda_1,\lambda_2}$ given in Corollary \ref{coro;Intertwining}, and we get:
\begin{equation}\label{eq:ConvolutionIdentity}
(\widetilde{\Phi}_k^{\lambda_1,\lambda_2}\otimes Id)\circ\widetilde{\Phi}_{n-k}^{\lambda_1+\lambda_2+2k,\lambda_3}=\sum_{p=0}^n U^{\lambda_1,\lambda_2;k}_{\lambda_3;n,p}(Id\otimes \widetilde{\Phi}_p^{\lambda_2,\lambda_3})\circ\widetilde{\Phi}_{n-p}^{\lambda_1,\lambda_2+\lambda_3+2p}.
\end{equation}
Taking the adjoint of this formula, following Theorem \ref{thm:RCasSBO}, concludes the proof of Theorem \ref{thm:EquationRC}.

\begin{rema}
One also has the inverse identity:
\begin{equation}\label{eq:EquationRCReverse}
\RC{\lambda_1}{\lambda_2+\lambda_3+2p}{n-p}\circ (Id\otimes \RC{\lambda_2}{\lambda_3}{p})=\sum_{k=0}^n \tilde{U}^{\lambda_1,\lambda_2;k}_{\lambda_3;n,p}\RC{\lambda_1+\lambda_2+2k}{\lambda_3}{n-k}\circ (\RC{\lambda_1}{\lambda_2}{k}\otimes Id),
\end{equation}
where
\begin{equation*}
\tilde{U}^{\lambda_1,\lambda_2;k}_{\lambda_3;n,p}=U^{\lambda_3,\lambda_2;p}_{\lambda_1;n,k}=\binom{n}{p}\frac{
(\lambda_2)_p(\lambda_1)_{n-p}(\lambda_1+\lambda_2+\lambda_3+n-1)_k}{(\lambda_1)_k (\lambda_1+\lambda_2+k-1)_k(\lambda_1+\lambda_2+2k)_{n-k}} R_{k,p}.
\end{equation*}
We have noted the obvious symmetry $R_{k,p}=R_{p,k}|_{\lambda_1\leftrightarrow \lambda_3}$. The reverse identity is actually the same as the original one by exchanging $\lambda_1$ and $\lambda_3$ and using the antisymmetry of the RC brackets.
\end{rema}

\begin{rema}
The interpretation of RC brackets as intertwining operators, even though interesting in its own, is not strictly necessary in our proof of Theorem \ref{thm:EquationRC}. Indeed, one can get the result from the convolution identity for Jacobi polynomials in Theorem \ref{theo:ConvolutionRacah} by calculating adjoints without reference to representations of $\mathfrak{sl}_2$. However, seeing the RC brackets as intertwining operators provides a natural explanation of these identities, and allows to bypass Theorem \ref{theo:ConvolutionRacah}.

Indeed, recall that Theorem \ref{theo:ConvolutionRacah} was obtained in \cite{LabrietPoulain23} using certain representations of the Racah algebra, seen as the centraliser of $\mathcal{U}(\mathfrak{sl}_2)$ diagonally embedded in $\mathcal{U}(\mathfrak{sl}_2)^{\otimes 3}$. The representations were on multiplicity spaces in the tensor product, and of course, can also be given on intertwining operators. Hence, one can prove the identities in Theorem \ref{thm:EquationRC} from the interpretation of the RC brackets as intertwining operators, by repeating the reasoning used in \cite{LabrietPoulain23} for Theorem \ref{theo:ConvolutionRacah}. The technical details would be completely similar, so we have decided to appeal directly to Theorem \ref{theo:ConvolutionRacah}.
\end{rema}

\section{Applications}
\subsection{Associativity of the Eholzer product}
For any $\lambda\in\mathbb{Z}_{> 0}$, we let $V_{\lambda}$ denote the space of holomorphic functions in one variable, and we define $V_0=\C$, identified with the space of constant functions. We set
\[V=\bigoplus_{\lambda\in\mathbb{Z}_{\geq 0}} V_{\lambda}\ .\]
For $\la_1,\la_2,n\geq 0$, we consider the RC bracket $\RC{\la_1}{\la_2}{n}$ defined by Formula \eqref{def:RC} as a linear map between the following vector spaces:
\[\RC{\la_1}{\la_2}{n}\ :\ \ V_{\la_1}\otimes V_{\la_2}\to V_{\la_1+\la_2+2n}\ .\]
We take a formal parameter $\hbar$ and we define the following star product:
\begin{equation}\label{def:StarProduct2}
\forall (f_1,f_2)\in V_{\la_1}\times V_{\la_2}\,,\ \ \ \ \ \ f_1\star f_2 =\sum_{n\geq 0} \RC{\la_1}{\la_2}{n}(f_1,f_2)\hbar^n\ .
\end{equation}
The star product takes values in the space $V^{\hbar}=V[[\hbar]]$ of formal power series in $\hbar$ with coefficients in $V$. Varying $\la_1,\la_2$, the star product extends to give a linear map:
\[\star \ :\ \ V^{\hbar}\otimes V^{\hbar} \to V^{\hbar}\ ,\] 
equipping the vector space $V^{\hbar}$ with a product.

As a Corollary of our set of identities for the RC brackets, we obtain a new proof of the associativity of the star product above, relying on a known generating function for the Racah polynomials.
\begin{coro}\label{coro:Associativity}
The map $\star$ equips $V^{\hbar}$ with an associative product.
\end{coro}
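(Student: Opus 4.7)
The plan is to deduce associativity directly from Theorem \ref{thm:EquationRC}, reducing everything to the combinatorial identity
\[\sum_{k=0}^nU^{\lambda_1,\lambda_2;k}_{\lambda_3;n,p}=1,\qquad p=0,\dots,n,\]
which itself turns out to be an instance of a classical generating-function identity for the Racah polynomials.

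First, for $f_i\in V_{\lambda_i}$ and $n\geq 0$, I would extract the coefficient of $\hbar^n$ in $(f_1\star f_2)\star f_3$. By the definition \eqref{def:StarProduct2} and the convention on parameters (so that $[f_i,f_j]_k$ carries weight $\lambda_i+\lambda_j+2k$), this coefficient equals $\sum_{k=0}^n [[f_1,f_2]_k,f_3]_{n-k}$. Applying Theorem \ref{thm:EquationRC} to each summand and swapping the order of summation gives
\[\sum_{k=0}^n[[f_1,f_2]_k,f_3]_{n-k}=\sum_{p=0}^n\Bigl(\sum_{k=0}^nU^{\lambda_1,\lambda_2;k}_{\lambda_3;n,p}\Bigr)[f_1,[f_2,f_3]_p]_{n-p}.\]
Granting the displayed identity above, the right hand side collapses to $\sum_{p=0}^n[f_1,[f_2,f_3]_p]_{n-p}$, which is exactly the coefficient of $\hbar^n$ in $f_1\star(f_2\star f_3)$. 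Since this would hold for every $n$, associativity follows. Notice that, quite conveniently, this implication does not require any linear independence argument for the operators $[f_1,[f_2,f_3]_p]_{n-p}$.

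Second, substituting the explicit formula \eqref{eq:FormulaCoeff} and pulling out the $k$-independent factors, the identity $\sum_k U^{\lambda_1,\lambda_2;k}_{\lambda_3;n,p}=1$ is equivalent to the closed-form evaluation
\[\sum_{k=0}^n\binom{n}{k}(\lambda_2)_k(\lambda_3)_{n-k}R_{p,k}=\frac{(\lambda_3)_p(\lambda_2+\lambda_3+p-1)_p(\lambda_2+\lambda_3+2p)_{n-p}}{(\lambda_1+\lambda_2+\lambda_3+n-1)_p}.\]
To prove it, I would expand $R_{p,k}$ as its defining ${}_4F_3$ series, exchange the order of summation, and observe that the inner sum over $k$ collapses by a Chu--Vandermonde-type evaluation; equivalently, the whole sum is a specialisation of one of the generating functions for Racah polynomials recorded in \cite{KLS10}. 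The main obstacle is to identify, among the catalogue of Racah-polynomial identities in \cite{KLS10}, the precise classical formula and to match its parameters to ours; once this selection is made, the verification is a short algebraic manipulation producing exactly the rational factor needed to cancel the prefactor in \eqref{eq:FormulaCoeff}.
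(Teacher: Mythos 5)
Your proposal is correct and follows essentially the same route as the paper: reduce associativity of $\star$ to the identity $\sum_{k=0}^n U^{\lambda_1,\lambda_2;k}_{\lambda_3;n,p}=1$ via Theorem \ref{thm:EquationRC}, then establish that identity by specialising a known generating function for the Racah polynomials from \cite{KLS10} at $t=1$ (where the two $_2F_1$ factors are evaluated by Chu--Vandermonde), which is exactly the content of Proposition \ref{prop:genseries}. Your observation that only the forward implication is needed, so no linear independence of the operators $[f_1,[f_2,f_3]_p]_{n-p}$ is required, is a correct and slightly sharper reading than the paper's ``equivalent to'' phrasing.
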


\begin{rema} 
The unit for the star product is the constant function $1$ belonging to $V_0$. Note also that putting $\hbar=0$ recovers the standard product of functions, since $\RC{\la_1}{\la_2}{0}(f,g)=fg$. 

Consider the subspace obtained by taking in each $V_{\lambda}$ the subspace of modular forms of weight $\lambda$. By the classical property of the RC bracket, the star product restricts to this subspace, and is called the Eholzer product. We refer to \cite{CMZ97}.
\end{rema}

\begin{proof}
The associativity condition can be rewritten as follows for all $n\in \N$:
\begin{equation}
\sum_{k=0}^n \RC{\lambda_1+\lambda_2+2k}{\lambda_3}{n-k}\circ (\RC{\lambda_1}{\lambda_2}{k}\otimes Id)=\sum_{p=0}^n \RC{\lambda_1}{\lambda_2+\lambda_3+2p}{n-p}\circ (Id\otimes \RC{\lambda_2}{\lambda_3}{p}). 
\end{equation}
Using Theorem \ref{thm:EquationRC} we get
\begin{equation}
\sum_{k=0}^n \sum_{p=0}^n U^{\lambda_1,\lambda_2;k}_{\lambda_3;n,p}\RC{\lambda_1}{\lambda_2+\lambda_3+2p}{n-p}\circ (Id\otimes \RC{\lambda_2}{\lambda_3}{p})=\sum_{p=0}^n \RC{\lambda_1}{\lambda_2+\lambda_3+2p}{n-p}\circ (Id\otimes \RC{\lambda_2}{\lambda_3}{p}). 
\end{equation}
The associativity is then equivalent to 
\begin{equation}
\sum_{k=0}^n U^{\lambda_1,\lambda_2;k}_{\lambda_3;n,p}=1\ \ \ \ \ \text{for any $p=0,1,\dots,n$.} 
\end{equation}
This follows from a more general statement giving the generating function for the coefficients $U^{\lambda_1,\lambda_2;k}_{\lambda_3;n,p}$, which itself follows from straightforward manipulations using a known generating function for the Racah polynomials. This is shown in Proposition \ref{prop:genseries} below.
\end{proof}

\begin{prop}\label{prop:genseries}
Let $p\in\{0,1,\dots,n\}$. The generating function of the coefficients $U^{\lambda_1,\lambda_2;k}_{\lambda_3;n,p}$ is:
\begin{multline}
\sum_{k=0}^n U^{\lambda_1,\lambda_2;k}_{\lambda_3;n,p}t^k=\frac{(\lambda_3)_n(\lambda_1+\lambda_2+\lambda_3+n-1)_p}{(\lambda_3)_p(\lambda_2+\lambda_3+p-1)_p(\lambda_2+\lambda_3+2p)_{n-p}}\times \\
\times {}_2F_1\left(\begin{array}{c}-p\ ,\ \lambda_1+n-p\\ \lambda_{1}+\lambda_2+\lambda_3+n-1\end{array};t\right){}_2F_1\left(\begin{array}{c} p-n\ ,\ p+\lambda_2\\ -\lambda_3-n+1\end{array};t\right)\ . 
\end{multline}
and in particular, we have:
\begin{equation}
\sum_{k=0}^n U^{\lambda_1,\lambda_2;k}_{\lambda_3;n,p}=1\ \ \ \ \ \text{for any $p=0,1,\dots,n$.}
\end{equation}
\end{prop}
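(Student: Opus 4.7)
The plan is to rewrite $\sum_{k=0}^n U^{\lambda_1,\lambda_2;k}_{\lambda_3;n,p}t^k$ as a weighted generating series of Racah polynomial values, apply a classical generating function from the KLS scheme, and finally evaluate the resulting product of ${}_2F_1$'s at $t=1$ by Chu--Vandermonde. First I would factor out of $U^{\lambda_1,\lambda_2;k}_{\lambda_3;n,p}$ the portion independent of $k$, and rewrite $\binom{n}{k}(\lambda_3)_{n-k}$ as $(\lambda_3)_n(-n)_k/(k!\,(-\lambda_3-n+1)_k)$. This normalises the series to
\[
\frac{(\lambda_3)_n(\lambda_1+\lambda_2+\lambda_3+n-1)_p}{(\lambda_3)_p(\lambda_2+\lambda_3+p-1)_p(\lambda_2+\lambda_3+2p)_{n-p}}\sum_{k=0}^n\frac{(-n)_k(\lambda_2)_k}{k!\,(-\lambda_3-n+1)_k}R_{p,k}\,t^k.
\]
Reading off the ${}_4F_3$ identifies $R_{p,k}$ with a Racah polynomial $R_p(\lambda(k);\lambda_2-1,\lambda_3-1,-n-1,\lambda_1+\lambda_2+n-1)$ truncated at $N=n$ via $\gamma+1=-n$. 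The self-duality of Racah polynomials then lets us view the sum equivalently as a generating series in the polynomial-degree variable of the dual family, which is the classical set-up for the KLS generating functions.

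The next step is to invoke the relevant generating function from \cite{KLS10}, which for Racah polynomials takes the form of a product of two ${}_2F_1$'s. A routine parameter translation then shows that the two factors match exactly ${}_2F_1(-p,\lambda_1+n-p;\lambda_1+\lambda_2+\lambda_3+n-1;t)$ and ${}_2F_1(p-n,p+\lambda_2;-\lambda_3-n+1;t)$, establishing the generating function formula of the proposition after recombining with the prefactor pulled out in the first step.

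For the value at $t=1$, Chu--Vandermonde ${}_2F_1(-m,b;c;1)=(c-b)_m/(c)_m$ applied to each factor yields $F_1(1)=(\lambda_2+\lambda_3+p-1)_p/(\lambda_1+\lambda_2+\lambda_3+n-1)_p$ and $F_2(1)=(\lambda_2+\lambda_3+2p)_{n-p}/(\lambda_3+p)_{n-p}$. Multiplying with the prefactor, three middle Pochhammers telescope and the expression collapses to $(\lambda_3)_n/\bigl((\lambda_3)_p(\lambda_3+p)_{n-p}\bigr)$, which equals $1$ by the Pochhammer splitting $(\lambda_3)_n=(\lambda_3)_p(\lambda_3+p)_{n-p}$.

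The hard part will be pinpointing the exact Racah generating function in the KLS tables whose weight coefficients are $(-n)_k(\lambda_2)_k/(k!\,(-\lambda_3-n+1)_k)$ under the dictionary above: several similar variants coexist and the parameter correspondence has to be set up carefully. If the required shape is not available off the shelf, the fallback is to expand the ${}_4F_3$ defining $R_{p,k}$, swap the sums over $k$ and the inner index, reduce the inner sum over $k$ via Chu--Vandermonde to a ${}_2F_1$, and reassemble the resulting double series as a Cauchy product of the two ${}_2F_1$ factors.
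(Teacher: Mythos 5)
Your proposal follows essentially the same route as the paper's proof: the same Pochhammer rewriting $\binom{n}{k}(\lambda_3)_{n-k}=(\lambda_3)_n(-n)_k/(k!\,(-\lambda_3-n+1)_k)$ to reach the weighted Racah series, the same appeal to the second Racah generating function in \cite{KLS10} (the paper fixes the dictionary as $\gamma+1=-n$, $\alpha+1=\lambda_2$, $\beta+1=\lambda_3$, $\delta=\lambda_1+\lambda_3+n-1$ together with the symmetry $R_{k,p}=R_{p,k}|_{\lambda_1\leftrightarrow\lambda_3}$), and the same Chu--Vandermonde evaluation at $t=1$ with the identical telescoping to $(\lambda_3)_n/\bigl((\lambda_3)_p(\lambda_3+p)_{n-p}\bigr)=1$. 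The argument is correct and complete modulo the parameter bookkeeping you flag, which is exactly the point the paper settles by stating the correspondence explicitly.
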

\begin{proof}
We use the following properties involving Pochammer symbols 
\[
\binom{n}{k}=(-1)^k\frac{(-n)_k}{k!} \quad\text{ and }\quad (x)_{n-k}=(-1)^{k}\frac{(x)_n}{(-x-n+1)_{k}}\ .
\]
Hence using the formula \eqref{eq:FormulaCoeff} for $U^{\lambda_1,\lambda_2;k}_{\lambda_3;n,p}$ we get 
\begin{equation*}
\sum_{k=0}^n U^{\lambda_1,\lambda_2;k}_{\lambda_3;n,p}t^k=\frac{(\lambda_3)_n(\lambda_1+\lambda_2+\lambda_3+n-1)_p}{(\lambda_3)_p(\lambda_2+\lambda_3+p-1)_p(\lambda_2+\lambda_3+2p)_{n-p}}
\sum_{k=0}^n \frac{(-n)_k(\lambda_2)_k}{(-n-\lambda_3+1)_k k!}R_{p,k}t^k.
\end{equation*}
The sum in the right hand side is precisely one of the known generating function for Racah polynomials. We refer to \cite{KLS10} and indicate that the correspondence with the parameters we use is $\gamma+1=-n$, $\alpha+1=\lambda_2$, $\beta+1=\lambda_3$ and $\delta=\lambda_1+\lambda_3+n-1$. The second generating function given in \cite{KLS10} is, using the obvious symmetry $R_{k,p}=R_{p,k}|_{\lambda_1\leftrightarrow \lambda_3}$,
\begin{equation*}
{}_2F_1\left(\begin{array}{c}-p\ ,\ \lambda_1+n-p\\ \lambda_1+\lambda_2+\lambda_3+n-1\end{array};t\right){}_2F_1\left(\begin{array}{c} p-n\ ,\ p+\lambda_2\\ -\lambda_3-n+1\end{array};t\right)
=\sum_{k=0}^n \frac{(-n)_k(\lambda_2)_k}{(-\lambda_3-n+1)_k k!}R_{p,k}t^k\ .
\end{equation*}
This proves the statement about the generating series. 

Now we evaluate it at $t=1$. Note that since $p\leq n$, both $_2F_1$ are actually polynomials. We use the following evaluation of hypergeometric series
\[{}_2F_1\left(\begin{array}{c}-p\ ,\ b\\ c\end{array};1\right)=\frac{(c-b)_p}{(c)_p}\ .\]
Hence we have:
\begin{multline*}
{}_2F_1\left(\begin{array}{c}-p\ ,\ \lambda_1+n-p\\ \lambda_1+\lambda_2+\lambda_3+n-1\end{array};1\right){}_2F_1\left(\begin{array}{c} p-n\ ,\ p+\lambda_2\\ -\lambda_3-n+1\end{array};1\right)\\
=\frac{(\lambda_2+\lambda_3+p-1)_p(-n-\lambda_3-p-\lambda_2+1)_{n-p}}{(\lambda_1+\lambda_2+\lambda_3+n-1)_p(-n-\lambda_3+1)_{n-p}}.
\end{multline*}
This concludes the proof since $(-n-\lambda_3-\lambda_2-p+1)_{n-p}=(-1)^{n-p}(\lambda_2+\lambda_3+2p)_{n-p}$ and $(-n-\lambda_3+1)_{n-p}=(-1)^{n-p}(\lambda_3+p)_{n-p}=(-1)^{n-p}\frac{(\lambda_3)_n}{(\lambda_3)_p}$.
\end{proof}

\begin{rema}
In \cite{CMZ97}, the authors proved the associativity for a one parameter family of star products: 
\begin{equation}
f\star_\kappa g =\sum_n~ t_n^\kappa(\lambda_1,\lambda_2)[f,g]_n \hbar^n,
\end{equation}
with
\[
t_n^\kappa(\lambda_1,\lambda_2)=\frac{1}{\binom{-2\lambda_2}{n}}\sum_{r+s=n}\frac{\binom{-\lambda_1}{r}\binom{-\lambda_1+\kappa-1}{r}\binom{n+\lambda_1+\lambda_2-\kappa}{s}\binom{n+\lambda_1+\lambda_2+-1}{s}}{\binom{-2\lambda_1}{r}\binom{2n+2\lambda_1+2\lambda_2-2}{s}},
\]
which are conjectured to be equal to
\[
t_n^\kappa(\lambda_1,\lambda_2)=\left(-\frac{1}{4}\right)^n\sum_{j\geq 0}\binom{n}{2j}\frac{\binom{-\frac{1}{2}}{j}\binom{\kappa-\frac{3}{2}}{j}\binom{\frac{1}{2}-\kappa}{j}}{\binom{-\lambda_1-\frac{1}{2}}{j}\binom{-\lambda_2-\frac{1}{2}}{j}\binom{n+\lambda_1+\lambda_2-\frac{3}{2}}{j}}.
\]
The case $\kappa=\frac{1}{2}$ or $\frac{3}{2}$ corresponds to the Eholzer product \eqref{def:StarProduct}. The associativity for this family of star products is then equivalent to the following identities for the Racah coefficients:
\begin{equation*}
\sum_{k=0}^n U^{\lambda_1,\lambda_2;k}_{\lambda_3;n,p}t_k^\kappa(\lambda_1,\lambda_2)t_{n-k}^\kappa(\lambda_1+\lambda_2+2k,\lambda_3)=t_p^\kappa(\lambda_2,\lambda_3)t_{n-k}^\kappa(\lambda_1,\lambda_2+\lambda_3+2p)\ .
\end{equation*}
Thus the associativity of the star products provide (as far as we know) new identities for the Racah coefficients. On the other hand, if we were able to prove these identities directly, this would provide another proof of the associativity of the above family of star products.
\end{rema}

\subsection{About Rankin--Cohen algebras}

A second application of Theorem \ref{thm:EquationRC} is related to the definition of Rankin--Cohen algebras. In the article \cite{Zagier94}, D.Zagier introduced the definition of a \emph{Rankin--Cohen algebra} (or \emph{RC algebra}) as follows:
\begin{defi}[\cite{Zagier94}]
A Rankin--Cohen algebra over a field $K$ is a graded vector space $M_*=\bigoplus_{k\in \N} M_k$ with $M_0=K\cdot 1$ and $\dim M_k$ finite, together with bilinear operations $[\cdot,\cdot]_n:M_k\otimes M_\ell \to M_{k+\ell+2n}$  which satisfy all the algebraic identities satisfied by the RC brackets.
\end{defi}

A first algebraic identity satisfied by the RC brackets is:
\begin{equation}\label{eq:CommutativityRC}
[f,g]_n=(-1)^n[g,f]_n\ .
\end{equation}
Other examples of algebraic identities are \eqref{eq:IdentityRankinCohen1} and \eqref{eq:IdentityRankinCohen2}, and a large class of examples is given in \cite{Zagier94}. However, the question is asked in \cite{Zagier94} of what would be a complete set for these algebraic identities, namely, a set of identities implying all the others.

By an algebraic identity satisfied by the RC brackets, we understand the following. 
We use the notations of the preceding subsection. Let $D\geq 0$ and $\lambda_1,\dots,\lambda_{D+1}\in\mathbb{Z}_{\geq 0}$. Take holomorphic functions $f_i\in V_{\la_i}$ and fix $n\geq 0$. We consider all possible ways to compose RC brackets in order to create a map:
\[V_{\la_1}\otimes\dots\otimes V_{\la_{D+1}}\to V_{\la_1+\dots+\la_{D+1}+2n}\ ,\]
that is, to create a function in $V_{\la_1+\dots+\la_{D+1}+2n}$ out of $(f_1,\dots,f_{D+1})$ using RC brackets. More precisely, we consider all possible bracketings of the expressions $f_{i_1} f_{i_2}\dots f_{i_{D+1}}$, for all possible permutations $i_1,\dots,i_{D+1}$ of $1,\dots,D+1$. For example, a standard one is:
\begin{equation}\label{bracketing}
[f_1,[f_2,\cdots\cdots,[f_D,f_{D+1}]_{k_1}]_{k_2}]\dots ]_{k_D}\ \ \ \text{ where } k_1+\dots+k_D=n\,.
\end{equation}
Finally, an algebraic identity is a linear combination of such operations giving $0$ (for all functions $f_1,\dots,f_{D+1}$). Note that the coefficients of the linear combination will depend on the weights $\la_i$'s, as in the example \eqref{eq:IdentityRankinCohen2}.

Theorem \ref{thm:EquationRC} gives a large set of such identities. They read: 
\begin{equation}\label{eq:EquationRCbrackets2}
[[f_1,f_2]_k,f_3]_{n-k}=\sum_{p=0}^nU^{\lambda_1,\lambda_2;k}_{\lambda_3;n,p}[f_1, [f_2,f_3]_p]_{n-p}\ ,
\end{equation}
and thus allows to rewrite a bracket of the form $[[f_1,f_2]_k,f_3]_{n-k}$ in terms of the standard ones \eqref{bracketing}, with $D=2$.

Now let $D\geq 1$ and consider an arbitrary bracketing of $f_{i_1} f_{i_2}\dots f_{i_{D+1}}$ for an arbitrary permutation $i_1,\dots,i_{D+1}$ of $1,\dots,D+1$. It is straightforward to see that using repeatedly the identities \eqref{eq:CommutativityRC} and \eqref{eq:EquationRCbrackets2}, which play the roles, respectively, of commutativity and associativity for the RC brackets, one can write an arbitrary bracketing as a linear combination of the standard ones \eqref{bracketing}.

Moreover, the standard bracketings in \eqref{bracketing}, when $k_1,\dots,k_D$ vary, are linearly independent. It follows by direct inspection of the degrees of the partial derivatives of the functions $f_1,\dots,f_{D+1}$. It also follows from their interpretation as intertwining operators from $V_{\la_1}\otimes \dots \otimes V_{\la_{D+1}}$ to the different copies of the Verma module $V_{\la_1+\dots+\la_{D+1}+2n}$.

Thus, for every linear combination of compositions of brackets, using only \eqref{eq:CommutativityRC} and \eqref{eq:EquationRCbrackets2}, we can write it as a linear combination of the linearly independent standard ones, and therefore, if the original linear combination is an algebraic identity, we find it to be $0$ by this procedure. We conclude that we have proved the following statement.
\begin{prop}
All algebraic identities for the RC brackets are consequences of \eqref{eq:CommutativityRC} and the identities of Theorem \ref{thm:EquationRC}. 
\end{prop}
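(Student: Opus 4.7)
The plan is to formalize the two-step reduction sketched informally in the paragraph preceding the proposition. Fix $D\geq 1$, weights $\lambda_1,\dots,\lambda_{D+1}$, and $n\geq 0$. An algebraic identity is a linear relation, valid for all $f_i\in V_{\lambda_i}$, among the (finitely many) maps $V_{\lambda_1}\otimes\cdots\otimes V_{\lambda_{D+1}}\to V_{\lambda_1+\cdots+\lambda_{D+1}+2n}$ obtained by composing RC brackets along bracketed arrangements of $f_1,\dots,f_{D+1}$. The strategy consists of two steps: (i) reduce every such composition, modulo \eqref{eq:CommutativityRC} and \eqref{eq:EquationRCbrackets2} only, to a linear combination of the right-nested standard bracketings \eqref{bracketing}; and (ii) show that those standard bracketings are linearly independent. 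Any putative identity then becomes, after the reduction, an explicit linear combination of linearly independent vectors equal to zero, hence each coefficient in the reduced form must already vanish, and so the original identity is by construction a consequence of \eqref{eq:CommutativityRC} and Theorem \ref{thm:EquationRC}.

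For step (i), I would induct on the pair $(D,N)$ where $N$ counts the \emph{left-heavy} internal nodes of the bracketing tree (an internal node being left-heavy when its left child is itself an internal node). If $N=0$, the tree is right-nested with leaves in some order $f_{i_1},\dots,f_{i_{D+1}}$, and iterated use of the antisymmetry \eqref{eq:CommutativityRC} (together with the outer induction on $D$ applied to the inner nested bracket) rearranges the leaves into the canonical order $f_1,\dots,f_{D+1}$ at the cost of signs. If $N>0$, pick a left-heavy sub-expression of the form $[[\cdot,\cdot]_k,\cdot]_{n'-k}$; Theorem \ref{thm:EquationRC} rewrites it as a linear combination of right-associated brackets $[\cdot,[\cdot,\cdot]_p]_{n'-p}$, strictly decreasing $N$ without changing the leaf labels. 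The induction closes because the right-hand side of \eqref{eq:EquationRCbrackets2} is right-associated at the top two levels and therefore introduces no new left-heavy node above those it replaces.

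For step (ii), I would use the intertwining interpretation. By Theorem \ref{thm:RCasSBO} and its iteration, the standard bracketing indexed by $(k_1,\dots,k_D)\in\Z_{\geq 0}^D$ with $k_1+\cdots+k_D=n$ is an element of $\Hom_{\mathfrak{sl}_2}\bigl(V_{\lambda_1}\otimes\cdots\otimes V_{\lambda_{D+1}}\,,\,V_{\lambda_1+\cdots+\lambda_{D+1}+2n}\bigr)$. Iterating the decomposition of Theorem \ref{thm:TensorVermaModuleDecomposition} (applied on the lowest weight side, then dualised) identifies the multiplicity of $V_{\lambda_1+\cdots+\lambda_{D+1}+2n}$ in the tensor product with the number of such tuples, which matches the number of standard bracketings. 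Because the standard bracketings correspond to distinct fusion paths through the iterated decomposition, they span that Hom space and hence form a basis. An alternative elementary verification, as noted in the paper, is to read off the leading partial-derivative monomial of each standard bracketing and observe that distinct tuples produce distinct leading terms.

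Combining the two steps gives the proposition: any algebraic identity $\sum_\alpha c_\alpha(\lambda)B_\alpha=0$ can be rewritten, using only \eqref{eq:CommutativityRC} and \eqref{eq:EquationRCbrackets2}, in the form $\sum_{(k_1,\dots,k_D)} c'_{(k_1,\dots,k_D)}(\lambda)\,[f_1,[f_2,\dots,[f_D,f_{D+1}]_{k_1}\dots]_{k_D}=0$, and step (ii) forces each coefficient $c'_{(k_1,\dots,k_D)}(\lambda)$ to vanish. I expect the principal obstacle to be a clean formulation of step (i): namely, packaging the bracketing-tree induction so that each move clearly corresponds to a single application of one of the allowed identities, and verifying that the induction measure $(D,N)$ strictly decreases at each move. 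The rest is a direct assembly, appealing to Theorems \ref{thm:EquationRC}, \ref{thm:RCasSBO}, and \ref{thm:TensorVermaModuleDecomposition}.
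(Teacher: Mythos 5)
Your proposal follows essentially the same route as the paper: the paper's argument is precisely the two-step reduction you describe (rewrite every bracketing into the standard right-nested ones using antisymmetry and Theorem \ref{thm:EquationRC}, then invoke linear independence of the standard bracketings, for which the paper also offers both the leading-derivative argument and the intertwining-operator argument). Your contribution is to try to make step (i) a genuine induction, which the paper leaves as ``straightforward to see''.

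One concrete point needs repair: the measure $N$ counting left-heavy internal nodes does \emph{not} strictly decrease under the rewriting $[[A,B]_k,C]_{n'-k}\mapsto\sum_p(\cdots)[A,[B,C]_p]_{n'-p}$. Before the move, the displayed root is left-heavy and the node $[A,B]_k$ is left-heavy exactly when $A$ is internal; after the move, the new root is left-heavy exactly when $A$ is internal and the new node $[B,C]_p$ is left-heavy exactly when $B$ is internal. The net change in $N$ is therefore $-1$ plus $1$ if $B$ is internal, so $N$ is unchanged whenever $B$ is itself a bracket (e.g.\ $[[f_1,[f_2,f_3]],f_4]$). The standard fix is to use instead the weight $\sum_v |L(v)|$, summed over internal nodes $v$, where $|L(v)|$ is the number of leaves in the left subtree of $v$: this drops by $|A|\geq 1$ at each such move and is unaffected by the leaf-reordering steps. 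A second, smaller, point: permuting the leaves of a right-nested bracketing cannot be done by antisymmetry alone, since applying \eqref{eq:CommutativityRC} at an internal level immediately creates a left-heavy node; you need to interleave it with the inverse identity \eqref{eq:EquationRCReverse} (itself a consequence of Theorem \ref{thm:EquationRC} and antisymmetry, as the paper notes) to realise adjacent transpositions of the leaves. With these two adjustments your argument closes and matches the paper's intent.
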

Note that the given set of identities is minimal in the sense that one cannot obtain one of these identities from the others. 

\begin{rema}
The different bracketings of the set of functions $f_1,\dots,f_{D+1}$ are in correspondence with the so-called binary coupling schemes for performing the tensor product of $D+1$ representations of $\mathfrak{sl}_2$. The fact that all identities between different bracketings are obtained applying repeatedly the ones for $D=2$ is reminiscent of the fact that we can move between different coupling schemes using repeatedly the relations between coupling schemes for $D=2$. In terms of orthogonal polynomials, the polynomials obtained as transition coefficients are expressed in terms of products of univariate Racah polynomials. See for example \cite{CFR23,FLVJ02,LabrietPoulain23,VdJ03} for more details. 
\end{rema}

\begin{example}
Let us prove identity \eqref{eq:IdentityRankinCohen1} as an example. To do so we expand each components of the sum in the basis $\{[f_1,[f_2,f_3]_0]_2,[f_1,[f_2,f_3]_1]_1,[f_1,[f_2,f_3]_2]_0\}$ which gives:
\begin{align*}
[[f_1,f_2]_1,f_3]_1&=U^{\lambda_1,\lambda_2;1}_{\lambda_3;2,0}[f_1,[f_2,f_3]_0]_2+U^{\lambda_1,\lambda_2;1}_{\lambda_3;2,1}[f_1,[f_2,f_3]_1]_1+U^{\lambda_1,\lambda_2;1}_{\lambda_3;2,2}[f_1,[f_2,f_3]_2]_0,\\
[[f_2,f_3]_1,f_1]_1&=-[f_1,[f_2,f_3]_1]_1,\\
[[f_3,f_1]_1,f_2]_1&=-U^{\lambda_1,\lambda_3;1}_{\lambda_2;2,0}[f_1,[f_2,f_3]_0]_2+U^{\lambda_1,\lambda_3;1}_{\lambda_2;2,1}[f_1,[f_2,f_3]_1]_1-U^{\lambda_1,\lambda_3;1}_{\lambda_2;2,2}[f_1,[f_2,f_3]_2]_0.
\end{align*}
Using formula \eqref{eq:FormulaCoeff}, one gets:
\begin{align*}
U^{\lambda_1,\lambda_2;1}_{\lambda_3;2,0}&=\frac{2\lambda_2\lambda_3}{\lambda_2+\lambda_3},\\
U^{\lambda_1,\lambda_2;1}_{\lambda_3;2,1}&=\frac{\lambda_1\lambda_2+\lambda_2\lambda_3-\lambda_3\lambda_1+2\lambda_2+\lambda_2^2}{(\lambda_2+\lambda_3)(\lambda_2+\lambda_3+2)}, \\
U^{\lambda_1,\lambda_2;1}_{\lambda_3;2,2}&=-\frac{2\lambda_1(\lambda_1+\lambda_2+\lambda_3+2)}{(\lambda_2+\lambda_3+1)(\lambda_2+\lambda_3+2)(\lambda_2+\lambda_3+4)}.
\end{align*}
Now a direct computation shows formula \eqref{eq:IdentityRankinCohen1}.

An example involving more brackets is given by the following identity:
\[
[[[f_1,f_2]_0,f_3]_0,f_4]_1+[[[f_2,f_3]_0,f_4]_0,f_1]_1+[[[f_4,f_3]_0,f_1]_0,f_2]_1+[[[f_4,f_1]_0,f_2]_0,f_3]_1=0,
\]
which is proved similarly using an expansion in the basis $\{[f_1,[f_2,[f_3,f_4]_0]_0]_1,[f_1,[f_2,[f_3,f_4]_0]_1]_0,$ $ [f_1,[f_2,[f_3,f_4]_1]_0]_0\}$.
\end{example}

\begin{rema}
In \cite[Corollary p.65]{Zagier94}, D.Zagier mentioned a way to obtain universal identities for the RC brackets. It was proved that the following  expression is invariant under the permutations of the triplets $(f_1,\lambda_1,x),(f_2,\lambda_2,y),(f_3,\lambda_3,z)$:
\[
\sum_{k=0}^n c_k(\lambda_1,\lambda_2;x,y)c_{n-k}(\lambda_1+\lambda_2+2k,\lambda_3;x+y,z)[[f_1,f_2]_k,f_3]_{n-k},
\]
with $c_k(\lambda_1,\lambda_2;x,y)=(2k+\lambda_1+\lambda_2-1)\frac{k!(k+\lambda_1+\lambda_2-2)!}{(k+\lambda_1-1)!(k+\lambda_2-1)!}(x+y)^nP_n^{\lambda_1-1,\lambda_2-1}\left(\frac{y-x}{x+y}\right)$. Varying $n$ and looking at the coefficients of the various monomials in $x,y,z$, one obtains identities for the RC brackets. 

One can easily recover the invariance of the above expressions using Theorem \ref{thm:EquationRC} and Theorem \ref{theo:ConvolutionRacah}, together with the formula \eqref{eq:CommutativityRC}. Thus the algebraic identities \cite[Corollary p.65]{Zagier94} can also be obtained as consequences of the results presented in this paper.
\end{rema}

\end{document}